\documentclass[preprint,12pt]{elsarticle}
\usepackage{amssymb}
\usepackage{amsmath}
\usepackage{amsthm}
\usepackage{mathtools}
\usepackage{bm}
\usepackage{multirow}
\usepackage{setspace}
\usepackage{subcaption}
\usepackage{caption}
\usepackage{tabularx}
\usepackage{booktabs}
\usepackage{enumitem}
\usepackage{natmove}
\biboptions{sort&compress}
\usepackage[hidelinks]{hyperref}
\usepackage{graphicx}
\usepackage{algorithm}
\usepackage{algpseudocode}
\usepackage{xcolor}
\usepackage{float}

\newtheorem{assumption}{Assumption}

\newtheorem{remark}{Remark}
\newtheorem{lemma}{Lemma}
\newtheorem{theorem}{Theorem}

\usepackage{orcidlink}

\setlist{nosep,leftmargin=*}
\journal{Journal of Process Control}

\begin{document}

\begin{frontmatter}

\title{Real-Time Nonlinear Model Predictive Control Framework for Event-Triggered Switching in Industrial Batch Polymerization Process}

\author[1]{Chenchen Zhou\orcidlink{0000-0003-2993-8353}}
\ead{chenchen.zhou@kuleuven.be}
\author[2]{Zuzhen Ji\orcidlink{0000-0002-6213-961X}}
\ead{jizuzhen@zjut.edu.cn}
\author[1]{Jose Matias\orcidlink{0000-0002-1094-6145}\corref{cor1}}
\ead{jose.matias@kuleuven.be}

\cortext[cor1]{Corresponding author.}
\address[1]{Chemical and Biochemical Reactor Engineering and Safety (CREaS), KU Leuven, Belgium}
\address[2]{Department of Mechanical Engineering, Zhejiang University of Technology, Hangzhou, Zhejiang Province, P.R. China}

\begin{abstract}
Controlling batch polymerization is challenging because the absence of a steady operating point prevents standard linearization; the dynamics are intrinsically nonlinear; and multi-phase operation induces state-triggered switching. This study systematically combines four established real-time NMPC ingredients, smooth mode blending, advanced-step warm starts, variable scaling, and a capped iteration budget, to attain real-time feasibility without ad hoc switching heuristics. We provide practice-oriented guidance for selecting smoothing gains and locating switching surfaces, and we make explicit the approximations introduced by smoothing such that, with appropriate tuning, the smoothed and original switching logic are numerically indistinguishable at solver-tolerance levels. All results are obtained in closed-loop simulation using an industrial gas--liquid polymerization benchmark with estimator-in-the-loop, compared against PID and conventional NMPC baselines. Results show improved constraint satisfaction and shorter batch duration under bounded computation, while an ablation study quantifies the specific contributions of each component individually.
\end{abstract}

\begin{highlights}
	\item Smooth mode blending removes ad hoc phase logic in batch NMPC.
	\item Tuning links smoothing gains and switch planes to sub-optimal NMPC.
	\item Real-time workflow: advanced-step, scaling, capped iterations.
	\item Industrial, estimator-in-the-loop study outperforms PID and conventional NMPC.
\end{highlights}

\begin{keyword}
nonlinear model predictive control \sep event-triggered switching \sep industrial batch polymerization \sep real-time control
\end{keyword}

\end{frontmatter}

\section{Introduction}\label{sec:intro}
Batch polymerization plants have progressed rapidly to meet demand for high-performance polymers with tailored properties \cite{soroush1992}. Unlike steady-state continuous trains, batch operations deliver bespoke molecular architectures while remaining accountable for batch-to-batch reproducibility and safety obligations imposed by hazardous monomers and catalysts \cite{lu2017batch}. This combination of flexibility and regulatory rigor is a challenge for conventional control paradigms.

Three coupled features distinguish industrial batch polymerization from continuous processes. (i) Event-driven, multi-stage recipes trigger transitions via conversion thresholds or material additions, injecting disturbances that condition the remainder of the batch \cite{lu2017batch}. (ii) Reaction kinetics are strongly exothermic and nonlinear, so precise thermal management is needed to preserve quality and reactor safety. (iii) Simultaneous gas, liquid, and occasional solid phases introduce state-dependent transport phenomena that complicate modeling and feedback design \cite{harmonray2000a}.

These characteristics limit the use of linear controllers, which although simple, demand frequent retuning or manual switching between phases, adding operator burden and decreasing economic performance \cite{soroush1992,richards2006}. Nonlinear Model Predictive Control (NMPC) therefore attracts interest because it coordinates nonlinear dynamics, hard constraints, and economic objectives within a single optimization problem \cite{bindlish2015a,grune2017}.

\begin{table}[t]
\centering
\caption{Mode-dependent objectives and constraints in batch polymerization.}
\label{tab:poly_modes}
\begin{tabularx}{0.8\columnwidth}{cXXX}
\toprule
\textbf{Mode} & \textbf{Phase Description} & \textbf{Primary Objective} & \textbf{Key Constraints} \\
\midrule
1 & Heating and monomer addition & Minimize heating time & Temperature and pressure limits \\
2 & Main polymerization & Maximize conversion rate & Safety and quality constraints \\
3 & Cooling and product finishing & Achieve molecular weight targets & Cooling rate and final specifications \\
\bottomrule
\end{tabularx}
\end{table}

Industrial deployment nevertheless remains sparse because two obstacles dominate:
\begin{enumerate}
	\item \textbf{Event-driven multi-mode dynamics:} Batch polymerization involves state-triggered mode changes where each phase carries distinct objectives, constraints, and safety priorities, such as time minimization during heating, conversion management during polymerization, and molecular-weight regulation during cooling (see Table~\ref{tab:poly_modes}). Unlike fixed hierarchical schedules \cite{mhaskar2005,lucia2013a}, the switching logic depends on process states (e.g., conversion, temperature) rather than time, requiring NMPC formulations that anticipate multiple switching sequences \cite{zhang2016,ma2010}.

	\item \textbf{Real-time computational tractability:} Large, multi-phase reactor models generate high-dimensional nonlinear programs that become mixed-integer when switching logic is explicit \cite{sahinidis2019,zhou2024b}. Solving these problems within sampling periods of seconds-to-minutes pushes beyond available compute capacity for industrial deployments.
\end{enumerate}

In the literature, two main strategies are proposed to deal with those obstacles.Model simplification technologies, ranging from reduction \cite{nogueira2020a,prasad2002,gao2013} and objective approximation \cite{bindlish2003} to subsystem isolation that neglects coupled heat transfer \cite{park2003,joy2019}, have been widely used to deal with these obstacles. Despite the reduction in computational burden, these methods decrease the robustness of the control applications in cases where quality requirements tighten.
In a different context, advances in solvers for nonlinear programming as well as in hardware (such as IPOPT \cite{biegler2009} and CasADi \cite{andersson2019}) allow for the application of real-time NMPC in automotive \cite{wang2010,chen2024} and distributed-parameter applications \cite{zhang2023a,yang2021a}. Even with the availability of these high performance solvers, applications combining the event-driven switching with large-scale nonlinear dynamics are still limited in the literature, particularly in the field of batch polymerization. The main reason is the combinatorial complexity and non-differentiability of the switching logic that arise where the switching decisions are formulated using binary variables. 

Broader control research offers additional ingredients to overcome these obstacles, for example: hierarchical architectures decouple switching from continuous control \cite{mhaskar2005,lucia2013a}, multi-controller designs dedicate regulators to each mode \cite{zhang2016,ma2010}, and smoothing or advanced optimization techniques attempt to regularize hybrid dynamics \cite{mcallister2022,trespalacios2014,andrikopoulos2013,zhang2016a}. However, scalable performance on industrial batch polymerization problems remains difficult. Particularly for the event-driven switching NMPC formulation\cite{quarshie2025}, existing remedies fall into three categories: mixed-integer formulations that add binary variables for each mode \cite{rawlings2020,mcallister2022}, multiple-model strategies that solve distinct NMPC problems per mode \cite{bemporad2009,magni2008}, and heuristic switching logic that substitutes simplified rules for optimization \cite{corona2019}. None, however, have demonstrated scalable performance on industrial batch polymerization problems.

To address the challenges of industrial batch polymerization control, our approach focus on the systematic combination of existing methodologies. Variable scaling and iteration capping, which are standard good practices \cite{biegler2009}, are used to improve solver performance. Advanced-step NMPC is adapted from Zavala and Biegler~\cite{zavala2009a}, who developed the sensitivity-based online correction for general NLPs; we apply it unchanged but show its synergy with smoothing. Smooth approximations for discrete switches follow the logistic relaxation employed in \cite{mcallister2022,trespalacios2014}; our contribution is to quantify approximation error bounds (Section~\ref{sec:theoretical_analysis}) and provide tuning guidelines specific to batch polymerization.


In summary, this paper develops a real-time NMPC framework tailored to event-triggered switching in industrial batch polymerization by (i) capturing all operational phases within a unified control problem, (ii) defining switching through state-driven triggers instead of fixed schedules, and (iii) enforcing smooth control evolution across phase boundaries.

The contributions of this work are as follows:
\begin{enumerate}
	\item[(C1)] \textbf{Theoretical Analysis:} A smoothed NMPC switching formulation using logistic relaxations with rigorous bounds on approximation error and feasibility inclusions, characterizing the method as an $\epsilon$-suboptimal approximation scheme.
	\item[(C2)] \textbf{Practical Tuning:} A systematic procedure for specifying smoothing parameter values that balances approximation accuracy against numerical stability, avoiding common pitfalls like gradient vanishing or explosion.
	\item[(C3)] \textbf{Integrated Real-Time Framework:} A cohesive NMPC design enabling real-time operation by integrating the smoothed formulation with advanced-step warm-starting, variable scaling, and fixed iteration budgets.
	\item[(C4)] \textbf{Industrial Validation:} Quantitative validation on a complex, multi-phase polymerization reactor, demonstrating superior disturbance rejection and constraint handling compared to industry-standard PID.
\end{enumerate}

\textbf{Paper Organization:} Section~\ref{sec:cpf} introduces the event-driven modeling and unified NMPC structure. Section~\ref{sec:rt} details the real-time implementation with smoothing, scaling, and advanced-step warm starting. Section~\ref{sec:theoretical_analysis} summarizes theoretical guarantees and tuning guidance, Section~\ref{sec:case} presents industrial case studies, and Section~\ref{sec:con} outlines future research directions.

\section{Control Problem Formulation}
\label{sec:cpf}
\subsection{Batch Polymerization as a Switched System}
Industrial batch polymerization processes exhibit multi-mode behavior because operational phases are activated by state conditions rather than fixed schedules. Table~\ref{tab:poly_modes} summarizes the resulting phases, each with distinct objectives, constraints, and safety priorities determined by conversion thresholds, temperature limits, or quality targets.

The hybrid character of these processes breaks the single-mode assumption underlying conventional NMPC formulations. To coordinate mode-dependent objectives, we pose an event-driven switching NMPC problem whose optimization structure adapts to the active phase, following the general multi-mode NMPC structure in \cite{rawlings2020,mcallister2022}:
\begin{equation}
	\label{eq:switchMPC}
	\begin{aligned}
		\min_{u \in S(\Delta)} &J(x(t_k),U) = \int_{t_k}^{t_{k}+N\Delta} l_{\mu(x(\tau))}(x(\tau), u(\tau)) d\tau + F_{\mu(x(t_{k}+N\Delta))}(x(t_{k}+N\Delta)) \\
		\text{s.t.} \quad & \dot{x}(t) = f(x(t), u(t)), \quad t \in [t_k, t_{k}+N\Delta] \\
		& g_{\mu(x(t))}(x(t), u(t)) \leq 0, \quad t \in [t_k, t_{k}+N\Delta] \\
		& x(t_k) = x_k \text{ (initial condition)} \\
		& u(t) \in \mathbb{U}, \quad t \in [t_k, t_{k}+N\Delta] \, .
	\end{aligned}
\end{equation}
where the continuous-time dynamics $\dot{x}(t) = f(x(t), u(t))$ describe the evolution of the state vector $x(t) \in \mathbb{R}^n$ collecting concentrations, temperature, and pressure, starting from the initial measurement $x_k$, driven by manipulated inputs $u(t) \in \mathbb{R}^m$. The decision variable $U=\{u_j\}_{j=0}^{N-1} \in S(\Delta)$ represents the piecewise-constant control sequence sampled with period $\Delta$, where $S(\Delta)$ denotes the set of admissible trajectories subject to input bounds $\mathbb{U}$. The functions $l_{\mu}(\cdot)$, $F_{\mu}(\cdot)$, and $g_{\mu}(\cdot)$ define the specific cost and constraints active for the current mode $\mu \in P=\{1,\ldots,M\}$. The mode selector $\mu: \mathbb{R}^n \rightarrow P$ maps states to operational modes. Switching times are not optimization variables; instead, the controller shapes the state evolution so that $\mu(x(t))$ triggers the appropriate mode specified in Table~\ref{tab:poly_modes}. This state-driven mechanism tolerates variations in process parameters but introduces abrupt changes in cost and constraint landscapes across mode boundaries. This nonsmooth optimization structure is addressed by the framework developed in the subsequent sections.
\section{Real-time NMPC Framework for Switched Systems}
\label{sec:rt}
\subsection{Smoothing Approximation for Switching Functions}
Equation~\eqref{eq:switchMPC} changes structure whenever the active mode switches, breaking the smoothness assumptions required by gradient-based NLP solvers. 
The changes are discontinuous by binary indicators, which are a function of the switching surface $h_i(x)$.
Each switching surface $h_i(x)$ therefore introduces a binary indicator
\begin{equation}
	\label{eq:heaviside}
p_i(x) = H(h_{i}(x)) = \begin{cases}
	1 & \text{if } h_{i}(x) \geq 0 \\
	0 & \text{if } h_{i}(x) < 0
\end{cases}, \quad i=1,\ldots,M,
\end{equation}
which we replace by a differentiable surrogate of the form
\begin{equation}
	\label{eq:smooth}
	p_i^{\text{smooth}}(x) := \sigma\bigl(\alpha (h_i(x)-\beta)\bigr)
\end{equation}
where $\sigma(z):=\bigl(1+\exp({-z})\bigr)^{-1}$ denotes the logistic function. $\alpha$ and $\beta$ are tunable parameters representing the slope and bias of the switching surface, respectively.

The logistic surrogate~\eqref{eq:smooth} converts the discontinuous step to a continuously differentiable transition over a band of width $O(\alpha^{-1})$ centered on the switching surface. the logistic surrogate replaces the discontinuous jump with a smooth "S-shape" curve. The transition (interval where \eqref{eq:smooth} changes from 0 to 1) is controlled by the slope parameter $\alpha$ and the bias parameter $\beta$. 

We employ separate relaxations for the cost and the constraints: the cost-side indicator $p_i^{\text{obj}}$ uses no offset so that the economic stage cost begins to contribute as the trajectory approaches the switching surface, whereas the constraint-side indicator $p_i^{\text{cons}}$ is shifted by $\beta_i$ so that constraints are enforced conservatively until the surface is actually crossed:
\begin{equation}
    p_i^{\text{obj}}(x) = \sigma\bigl(\alpha_i h_i(x)\bigr) 
\end{equation}

\begin{equation}
    p_i^{\text{cons}}(x) = \sigma\bigl(\alpha_i (h_i(x)-\beta_i)\bigr) 
\end{equation}

This leads to our smoothed NMPC formulation:
\begin{equation}
	\label{eq:smoothNMPC}
	\begin{aligned}
		\min_{U\in S(\Delta)}\ 
		&J^{\mathrm{smooth}}(x(t_k),U)
		 := \int_{t_k}^{t_k+N\Delta}\!
			\sum_{i=1}^{M} p_i^{\mathrm{obj}}\!\bigl(x(\tau)\bigr)\,
			l_i\!\bigl(x(\tau),u(\tau)\bigr)\, \mathrm{d}\tau \\
		&\qquad\quad
		 + \sum_{i=1}^{M} p_i^{\mathrm{obj}}\!\bigl(x(t_k+N\Delta)\bigr)\,
			F_i\!\bigl(x(t_k+N\Delta)\bigr),\\
    \text{s.t.}\quad
    &\dot{x}(t)=f\bigl(x(t),u(t)\bigr),\\
    &p_i^{\mathrm{cons}}\bigl(x(t)\bigr)\,g_i\bigl(x(t),u(t)\bigr)\le0,\quad i=1,\ldots,M\\
    &x(t_k)=x_k,\quad u(t)\in\mathbb{U},\\
    &u(t)=u_j\quad\forall t\in[t_k+j\Delta,t_k+(j+1)\Delta), j = 0,1,...,N-1.
	\end{aligned}
\end{equation}

Parameter selection for $\alpha_i$ and $\beta_i$, together with approximation error bounds, is discussed in Section~\ref{sec:theoretical_analysis}.
\subsection{Real-Time NMPC Framework Integration}
\label{sec:framework}

To achieve real-time feasibility for the stiff polymerization process, the proposed framework integrates the smoothing approximation with three established numerical strategies. For the sake of brevity, we formulate the NMPC problem \eqref{eq:smoothNMPC} into a standard NLP:
\begin{equation}
    \label{eq:nlp_formulation}
    \begin{aligned}
        \min_{w } \quad F(w, x_k) \quad
        \text{s.t.} \quad &c(w, x_k) = 0 \quad
        g(w, x_k) \leq 0
    \end{aligned}
\end{equation}
where $w = [U^T]^T$ and $x_k$ is the initial state. $F(.)$ is the objective function, $c(.)$ are the equality constraints , and $g(.)$ are the inequality constraints.

Following the recommended numerical strategies in literature, we adopt:

\begin{itemize}
	\item \textbf{Variable Scaling:} To address ill-conditioning (e.g., $w_1 \sim 10^3$ vs. $w_2 \sim 10^{-7}$), we employ a diagonal scaling matrix $D = \text{diag}(d_1, \dots, d_n)$. The NLP solver operates on scaled variables $\tilde{w} = D^{-1}w$ \cite{biegler2009}. This external scaling complements the solver's internal scaling (e.g., equilibration of Hessian rows in IPOPT) by normalizing the decision space before derivatives are evaluated.
        \begin{equation}
        \min_{\tilde{w}} F(D\tilde{w}, x_k) \quad \text{s.t.} \quad c(D\tilde{w}, x_k) = 0, \quad g(D\tilde{w}, x_k) \leq 0.
        \label{eq:nlp_scaled}
    \end{equation}

    \item \textbf{Advanced-step NMPC (asNMPC):} Optimization is split into a background phase and an online sensitivity update \cite{zavala2009a}. The background problem Eq.~\eqref{eq:smoothNMPC} is solved for the predicted state. Upon measuring $x_k$, the optimal primal-dual vector $s^*$ (which contains the decision variables $w^*$ and the Lagrange multipliers) is corrected via:
    \begin{equation}
        \tilde{s}(x_{k})=s^{*}(x_{k-1})+\mathcal{M}^{-1}\mathcal{N}(x_{k}-x_{k-1}),
        \label{eq:sens_update}
    \end{equation}
    where $\mathcal{M}$ is the KKT matrix (Lagrangian Hessian and constraint Jacobians) and $\mathcal{N}$ represents the sensitivity of the optimality conditions with respect to parameter $x_k$. A line search on $\tau \in [0,1]$ projects this prediction back to the feasible set $\mathbb{U}$:
    \begin{equation}
        \hat{s}(x_{k})=s^{*}(x_{k-1})+\tau(\tilde{s}(x_{k})-s^{*}(x_{k-1})).
        \label{eq:proj_update}
    \end{equation}

    \begin{figure}[ht]
    \centering
    \begin{subfigure}[b]{0.88\columnwidth}
        \centering
        \includegraphics[width=\columnwidth]{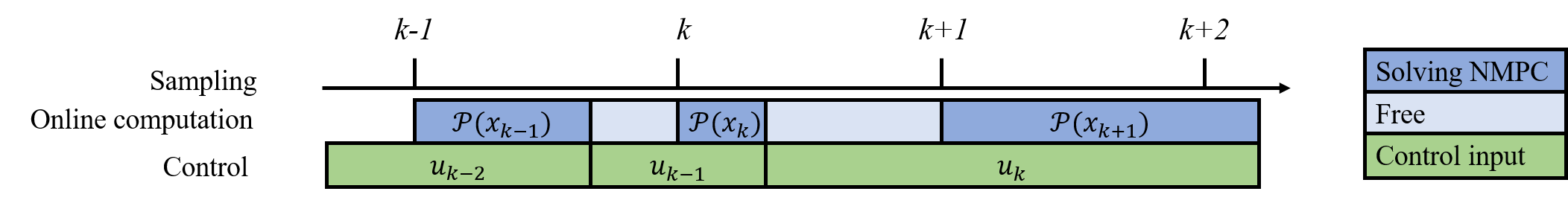}
        \caption{Standard NMPC}
        \label{fig:normNMPC}
    \end{subfigure}
    \begin{subfigure}[b]{0.88\columnwidth}
        \centering
        \includegraphics[width=\columnwidth]{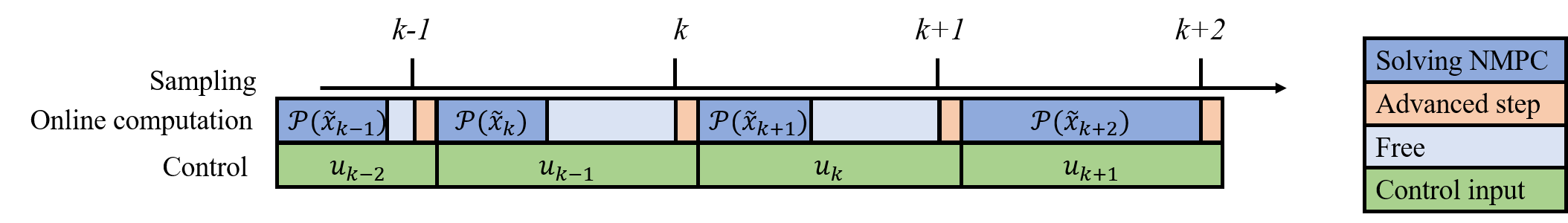}
        \caption{Advanced-step NMPC}
        \label{fig:asNMPC}
    \end{subfigure}
    \caption{Timing diagrams illustrating the computational delay handling. Standard NMPC (a) applies control $u_k$ only after optimization concludes, introducing a delay $\delta$. Advanced-step NMPC (b) shifts the optimization to the background, allowing instant correction via sensitivity updates when $x_k$ becomes available.}
    \label{fig:nmpc_timing}
    \end{figure}

    \item \textbf{Fixed Iteration Budget:} Following \cite{wang2010}, we impose a strict cap $K_{\max}$ on interior-point iterations. The solver returns the current iterate when the limit is reached. Although strict convergence is not guaranteed, the sensitivity update acts as a restorative step since the line search \eqref{eq:proj_update} ensures a feasible control is applied. Because the warm-start from the previous step is provided, which is close to the optimal solution, the iterates typically converge quickly.
\end{itemize}

Algorithm \ref{alg:rt_nmpc} summarizes the complete workflow, alternating between background prediction and online correction. The individual contributions of smoothing, advanced-step execution, scaling, and iteration capping are quantified in the ablation study of Section~\ref{sec:case}.

\begin{algorithm}[t]
\caption{Real-time smoothed NMPC with scaling and advanced-step warm start}
\label{alg:rt_nmpc}
\begin{algorithmic}[1]
\State \textbf{Input:} smoothing parameters $\{\alpha_i,\beta_i\}$, scaling matrix $D$, prediction horizon $N$, sampling time $\Delta$, iteration cap $K_{\max}$
\State \textbf{Initialisation:} solve the scaled smoothed NLP $\mathcal{P}(D^{-1}x_0)$ until convergence; store primal-dual solution $s^\star(q_0)$ with $q_0=D^{-1}x_0$ and the KKT factorisation.
\For{$k = 0,1,\ldots$}
    \State Measure/estimate $x_k$; set $q_k = D^{-1}x_k$.
    \State \textbf{Online correction}
        \Statex \quad (a) Apply sensitivity update $\tilde{s}(q_k)$ via \eqref{eq:sens_update}.
        \Statex \quad (b) Project to feasible set by selecting the largest $\tau\in[0,1]$ that keeps \eqref{eq:proj_update} within $\mathbb{U}$.
        \Statex \quad (c) Extract first control move $u_k$ from $\hat{s}(q_k)$ and apply to the plant.
    \State \textbf{Background prediction}
        \Statex \quad (a) Integrate $\dot x = f(x,u_k)$ from $x_k$ over one sampling period $\Delta$ to obtain the predicted state $\tilde{x}_{k+1}$ and set $q_{k+1}=D^{-1}\tilde{x}_{k+1}$.
        \Statex \quad (b) Warm-start NLP $\mathcal{P}(q_{k+1})$ with $\hat{s}(q_k)$ and iterate IPOPT for at most $K_{\max}$ steps; if convergence not reached, keep iterate as new $s^\star(q_{k+1})$.
        \Statex \quad (c) Update sensitivity matrices $\mathcal{M}^{-1}\mathcal{N}$ using the active constraint set of $\mathcal{P}(q_{k+1})$.
\EndFor
\end{algorithmic}
\end{algorithm}

\section{Theoretical Analysis of the Smoothing Strategy}
\label{sec:theoretical_analysis}


We begin by recalling the discontinuous mode indicator $p_i(x)=H(h_i(x))$ from~\eqref{eq:heaviside} and the logistic surrogates $p_i^{\text{obj}}(x)$, $p_i^{\text{cons}}(x)$ from~\eqref{eq:smooth}, governed by the smoothing gains $\alpha_i>0$, bias $\beta_i\ge 0$, and target accuracy $\epsilon\in(0,\tfrac{1}{2})$. The resulting transition region, where the surrogates disagree with the true indicator, is illustrated in Figure~\ref{fig:smoothing_parameter_schematic}; additional elements of the figure are discussed in the following subsections.

Building on these objects, we establish the following results: (i) a characterisation of the transition region (Lemma~\ref{lem:transition_region}), (ii) a bound on the dwell time inside it (Lemma~\ref{lem:dwell_time}), (iii) constraint inclusion guarantees (Lemma~\ref{lem:feasible_inclusion}), (iv) a cost approximation error bound (Lemma~\ref{lem:cost_function_bound}), and (v) an overall bound on the maximum approximation error of the smoothed NMPC (Theorem~\ref{thm:approximation_error}). Only the statements are given here; the full proofs are collected in Appendix~\ref{app:proofs}. Practitioners primarily interested in implementation may skip directly to Subsection~\ref{subsec:tuning_guidelines}, where the theoretical results justify the practical parameter-selection guidance.

\begin{figure}[H]
\centering
\includegraphics[width=0.98\columnwidth]{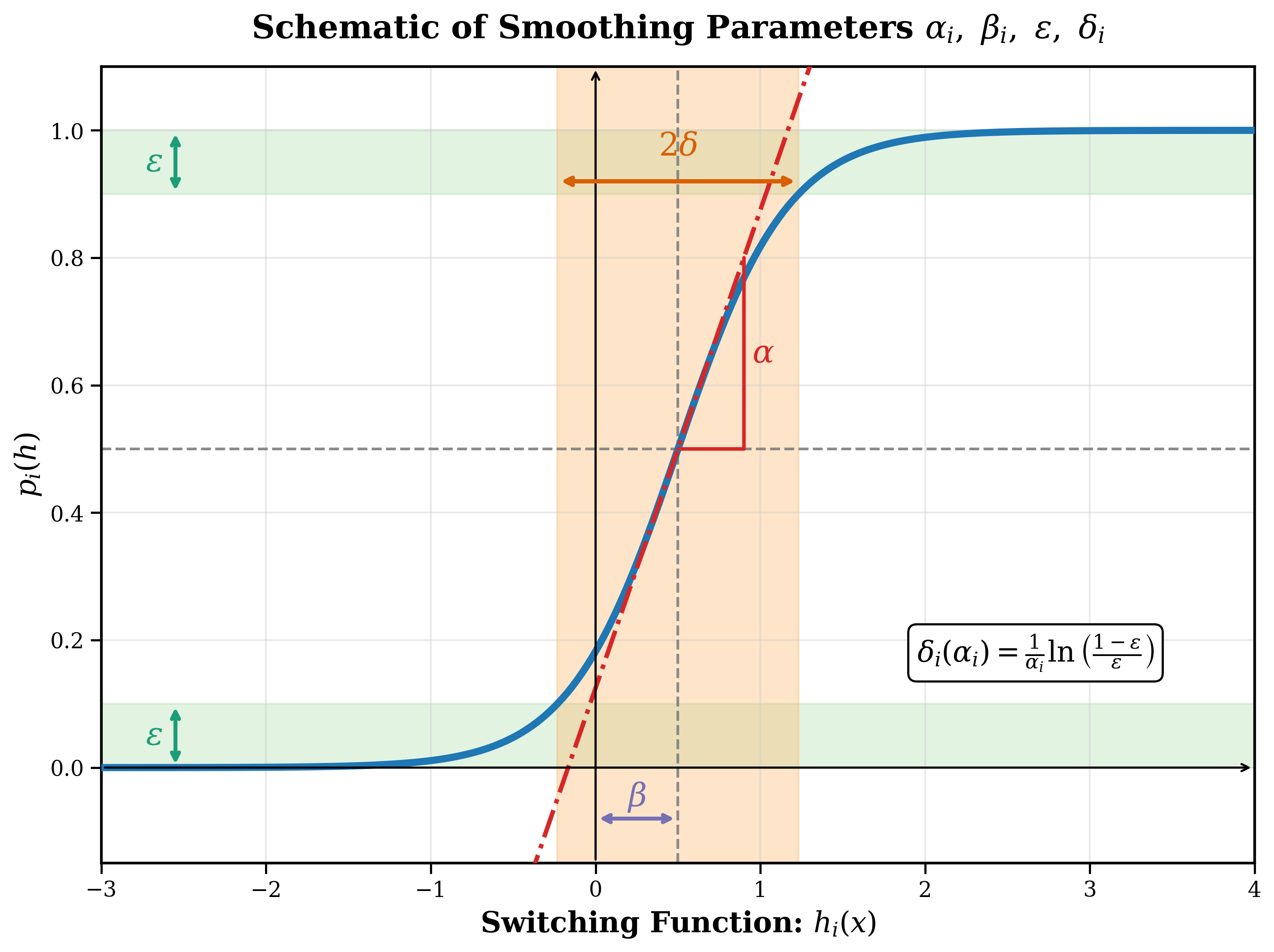}
\caption{Geometric interpretation of the smoothing approximation. The logistic function $\sigma(\alpha(h(x)-\beta))$ creates a smooth transition band of width $\delta_i(\alpha_i)=\frac{1}{\alpha_i}\ln\left(\frac{1-\varepsilon}{\varepsilon}\right)$ around the switching surface $h(x)=\beta$, replacing the discontinuous step. The transition region is indicated by the shaded yellow band.}
\label{fig:smoothing_parameter_schematic}
\end{figure}

\begin{lemma}[Transition Region Characterization]
\label{lem:transition_region}
Let $\epsilon\in(0,\tfrac{1}{2})$. The transition region for $p_i^{\text{obj}}$ is the set
\[
	\mathcal{R}_i(\alpha_i)
	:= \{\, x : |p_i^{\text{obj}}(x)-p_i(x)| > \epsilon \,\}
	= \{\, x : -\delta_i(\alpha_i) < h_i(x) < \delta_i(\alpha_i) \,\},
\]
where the half-width is
\[
	\delta_i(\alpha_i) = \frac{1}{\alpha_i}\,\ln\!\left(\frac{1-\epsilon}{\epsilon}\right).
\]
Hence the total width satisfies $2\delta_i(\alpha_i)=O(\alpha_i^{-1})$.
\end{lemma}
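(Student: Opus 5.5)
The plan is to reduce the statement to a scalar computation in the variable $z := h_i(x)$. Both $p_i(x)=H(z)$ and $p_i^{\text{obj}}(x)=\sigma(\alpha_i z)$ depend on $x$ only through $z$. So it suffices to show that the scalar error $e(z):=|\sigma(\alpha_i z)-H(z)|$ exceeds $\epsilon$ exactly when $-\delta_i(\alpha_i)<z<\delta_i(\alpha_i)$. The set identity for $\mathcal{R}_i(\alpha_i)$ then follows by taking preimages under $h_i$. Throughout I will use three elementary facts about the logistic function: $\sigma$ is strictly increasing, $1-\sigma(z)=\sigma(-z)$, and $\sigma^{-1}(y)=\ln\bigl(y/(1-y)\bigr)$ for $y\in(0,1)$.

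I will split according to the two branches of the Heaviside definition~\eqref{eq:heaviside}.

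\emph{Case $z\ge 0$.} Here $H(z)=1$, so $e(z)=1-\sigma(\alpha_i z)=\sigma(-\alpha_i z)$. By strict monotonicity of $\sigma$, $e(z)>\epsilon$ is equivalent to $-\alpha_i z>\sigma^{-1}(\epsilon)=\ln\bigl(\epsilon/(1-\epsilon)\bigr)$. Since $\alpha_i>0$, this is equivalent to $z<\tfrac{1}{\alpha_i}\ln\bigl((1-\epsilon)/\epsilon\bigr)=\delta_i(\alpha_i)$.

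\emph{Case $z<0$.} Here $H(z)=0$, so $e(z)=\sigma(\alpha_i z)$. The condition $e(z)>\epsilon$ is equivalent to $\alpha_i z>\ln\bigl(\epsilon/(1-\epsilon)\bigr)$, that is, $z>-\delta_i(\alpha_i)$.

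Combining the cases gives $\{z: e(z)>\epsilon\}=\bigl([0,\delta_i)\bigr)\cup\bigl((-\delta_i,0)\bigr)=(-\delta_i,\delta_i)$. The width claim $2\delta_i(\alpha_i)=O(\alpha_i^{-1})$ is then immediate, because $\ln\bigl((1-\epsilon)/\epsilon\bigr)$ is a constant independent of $\alpha_i$.

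The only delicate point is bookkeeping rather than a real obstacle. The hypothesis $\epsilon\in(0,\tfrac12)$ is exactly what makes $\ln\bigl((1-\epsilon)/\epsilon\bigr)>0$. This guarantees $\delta_i>0$, so the interval is nonempty, and the two one-sided pieces glue at $z=0$: indeed $e(0)=\tfrac12>\epsilon$, so the switching surface itself belongs to $\mathcal{R}_i$, consistent with the convention $H(0)=1$. I would also remark that the same argument applied to $p_i^{\text{cons}}$ yields the shifted band $\{x: \beta_i-\delta_i<h_i(x)<\beta_i+\delta_i\}$, measured relative to the shifted step $H(h_i(x)-\beta_i)$. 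This is the version depicted in Figure~\ref{fig:smoothing_parameter_schematic} and will be needed for the constraint inclusion result.
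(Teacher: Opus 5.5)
Your proof is correct and is essentially the paper's argument: it uses the same split on the sign of $h_i(x)$, giving $h_i(x)<\delta_i(\alpha_i)$ on the branch $p_i=1$ and $h_i(x)>-\delta_i(\alpha_i)$ on the branch $p_i=0$; the only cosmetic difference is that you invert via $1-\sigma(z)=\sigma(-z)$ and the logit $\sigma^{-1}(y)=\ln\bigl(y/(1-y)\bigr)$, whereas the paper manipulates $e^{-z}$ directly. Your closing aside on $p_i^{\text{cons}}$ is fine as stated, relative to the shifted step, but the paper's Lemma~\ref{lem:feasible_inclusion} does not use that shifted band: it compares against the unshifted indicator $p_i$ through the infimum $\bar{p}_i=\sigma(-\alpha_i\beta_i)$ over $h_i\ge 0$.
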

\noindent\textit{Proof.} See Appendix~\ref{app:proofs}.

\begin{assumption}[System dynamics]
\label{ass:system_dynamics}
The vector field $f(x,u)$ is locally Lipschitz and has linear growth on compact sets.
The admissible input set $\mathbb{U}$ is compact and convex. Inputs are implemented as piecewise-constant signals over intervals of length $\Delta>0$, matching the NMPC discretisation. Hence trajectories are unique and
forward complete under piecewise-constant inputs.
\end{assumption}

\begin{assumption}[Switching surface regularity and uniform transversality]
\label{ass:switching_surface}
For each switching surface $h_i(x)=0$ the gradient is bounded, and there exist constants
$\bar{\delta}_i>0$, ${\nu}_i>0$, and an integer $K_i\ge1$ such that closed-loop trajectories generated
by admissible inputs $u(t)\in\mathbb{U}$ satisfy
\[
  |h_i(x)|\le \bar{\delta}_i
  \;\Rightarrow\;
  \big|\nabla h_i(x)^\top f(x,u)\big|
  \ge {\nu}_i,
\]
hence trajectories cross each surface transversally within $|h_i(x)|\le\bar{\delta}_i$, and at most
$K_i$ crossings occur on any finite horizon.
\end{assumption}
\begin{remark}[Practical design of switching surfaces]
\label{rem:switching_design}
This theoretical analysis informs practical design: switching surfaces defined by variables that evolve monotonically (e.g., conversion, cumulative feed, time) naturally satisfy the transversality condition (Assumption~\ref{ass:switching_surface}) because $\nabla h_i^\top f$ does not vanish. In contrast, surfaces defined by regulated variables (e.g., temperature around a setpoint) may violate transversality ($\nabla h_i^\top f \approx 0$), causing the denominator in Lemma~\ref{lem:dwell_time} to approach zero and the dwell time bound to diverge. Therefore, we recommend defining phase transitions using integral or monotonic states to guarantee robust numerical behavior.
\end{remark}

\begin{lemma}[Transition Region Dwell Time]
\label{lem:dwell_time}
Under Assumptions \ref{ass:system_dynamics} and \ref{ass:switching_surface}, for each surface $i$, and $\bar{\delta}_i\ge \delta_i(\alpha_i) >0$, the time the trajectory spends in $\mathcal{R}_i(\alpha_i)=\{x: |h_i(x)|\le \delta_i(\alpha_i)\}$
satisfies
\begin{equation}
  \tau_i(x(t),\delta_i) \ \le\ K_i\,\frac{2\,\delta_i(\alpha_i)}{{\nu}_i},
\end{equation}
where $\tau_i(x(t),\delta_i)$ is the Lebesgue measure of the visit set on $[0,T]$.
\end{lemma}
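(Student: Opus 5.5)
The plan is to study the scalar signal $\eta(t) := h_i(x(t))$ along a closed-loop trajectory on $[0,T]$ and reduce the dwell-time estimate to a one-dimensional monotonicity argument. By Assumption~\ref{ass:system_dynamics}, for a piecewise-constant input the trajectory $x(\cdot)$ is unique, continuous, and absolutely continuous. Within each sampling interval it is $C^1$. Since $h_i$ is $C^1$ with bounded gradient (Assumption~\ref{ass:switching_surface}), $\eta$ is absolutely continuous. At almost every $t$ it satisfies
\[
\dot\eta(t)=\nabla h_i(x(t))^\top f\bigl(x(t),u(t)\bigr).
\]
The visit set is $\mathcal{V}:=\{t\in[0,T]: |\eta(t)|\le\delta_i(\alpha_i)\}$. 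It is closed, so its complement in $[0,T]$ is relatively open, and $\mathcal{V}$ decomposes into at most countably many maximal connected components (closed intervals, possibly degenerate).

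\textbf{Step 1 (sign-definiteness inside the band).} Because $\delta_i(\alpha_i)\le\bar\delta_i$, every $t\in\mathcal{V}$ satisfies $|\eta(t)|\le\bar\delta_i$. Transversality then gives $|\dot\eta(t)|\ge\nu_i$ for a.e.\ $t\in\mathcal{V}$. On a component $I=[a,b]$ of $\mathcal{V}$, I want $\dot\eta$ to keep a fixed sign. Within a single sampling interval $f(x,u_j)$ is continuous in $t$, so $\dot\eta$ is continuous there and cannot change sign while staying bounded away from zero. Across a sampling instant $\dot\eta$ may jump. This is the step I expect to be the main obstacle. I will resolve it by reading Assumption~\ref{ass:switching_surface} as saying that each maximal monotone passage through the band counts as a crossing. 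A sign flip would make $\eta$ turn back inside the band, which starts a new passage. So I split each component at its (finitely many, since $T$ is finite and $\Delta>0$) sign-change instants into sub-intervals on which $\dot\eta$ has constant sign a.e. Each sub-interval is one crossing, and by the assumption the total number of such monotone passages is at most $K_i$.

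\textbf{Step 2 (length of one passage).} On a sub-interval $J=[a',b']$ where, say, $\dot\eta\ge\nu_i$ a.e., absolute continuity gives
\[
\eta(b')-\eta(a')=\int_{a'}^{b'}\dot\eta\,dt\ \ge\ \nu_i\,(b'-a').
\]
Since both endpoints lie in $[-\delta_i,\delta_i]$, the left side is at most $2\delta_i(\alpha_i)$. Hence $|J|\le 2\delta_i(\alpha_i)/\nu_i$. The case $\dot\eta\le-\nu_i$ is symmetric. Components that merely touch the band boundary without entering it have measure zero and contribute nothing.

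\textbf{Step 3 (summation).} The Lebesgue measure of $\mathcal{V}$ is the sum of the lengths of these passages. There are at most $K_i$ of them, each of length at most $2\delta_i(\alpha_i)/\nu_i$, so
\[
\tau_i\le K_i\,\frac{2\delta_i(\alpha_i)}{\nu_i}.
\]
Finally, by Lemma~\ref{lem:transition_region}, $\mathcal{R}_i(\alpha_i)$ is exactly this band (up to its boundary, which has measure zero in time by the same transversality argument). The bound therefore holds for the transition region as stated.
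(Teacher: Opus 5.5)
Your proof is correct and follows the paper's argument. Apply the fundamental theorem of calculus to $h_i(x(t))$ on each passage through the band, use $|\tfrac{d}{dt}h_i(x(t))|\ge\nu_i$ to bound each passage by $2\delta_i(\alpha_i)/\nu_i$, and sum over at most $K_i$ crossings. You are more careful than the paper, whose step $2\delta_i(\alpha_i)\ge\nu_i(t^k_{\text{out}}-t^k_{\text{in}})$ silently needs $\tfrac{d}{dt}h_i$ to keep one sign on each crossing. If the transversality bound is read as holding for every $u\in\mathbb{U}$ at the visited states, convexity of $\mathbb{U}$ forces $u\mapsto\nabla h_i(x)^\top f(x,u)$ to have constant sign, so sign flips at sampling instants cannot occur and your reinterpretation of $K_i$ becomes unnecessary.
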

\noindent\textit{Proof.} See Appendix~\ref{app:proofs}.

\subsection{Smoothing Approximation Analysis}
We now collect the numerical tolerances and local regularity conditions invoked by the smoothing analysis.
\begin{assumption}[Solver resolution]
	\label{ass:solver_tol}
	The NLP solver uses finite tolerances $\tau_{\mathrm{feas}}>0$ (feasibility) and
	$\tau_{\mathrm{opt}}>0$ (optimality). Constraints are considered satisfied whenever
	$\max\{0,\,g_i(x,u)\}\le \tau_{\mathrm{feas}}$.
\end{assumption}

\begin{assumption}[Smoothing accuracy]
\label{ass:error}
For each surface there exists $\epsilon_i\in(0,\tfrac{1}{2})$ and a neighbourhood
$\mathcal{N}_i$ of $\{x:h_i(x)=0\}$ such that
$|p_i^{\text{obj}}(x)-p_i(x)|\le\epsilon_i$ and
$|p_i^{\text{cons}}(x)-p_i(x)|\le\epsilon_i$ whenever $x\notin\mathcal{N}_i$, with
$\epsilon_i \le \tau_{\mathrm{feas}}$.
\end{assumption}
The geometry in Figure~\ref{fig:smoothing_parameter_schematic} corresponds to this assumption, where $\mathcal{N}_i$ aligns with the transition bandwidth $2\delta$ and $\epsilon_i$ with the tolerance bands.
\begin{assumption}[Constraint regularity near switching surfaces]
\label{ass:constraint_regularity}
There exist constants $L_{g,i}>0$, $\kappa_i>0$, and $G_{g,i}>0$ such that for any
$|h_i(x)|\le \delta$ (for sufficiently small $\delta>0$) there exists $x_i^{0}$ on $h_i(x)=0$ with
$\|x - x_i^{0}\| \le \kappa_i |h_i(x)|$ and
\[
  g_i(x,u) \ \le\ g_i(x_i^{0},u) + L_{g,i}\,|h_i(x)|,
  \qquad |g_i(x,u)| \ \le\ G_{g,i}.
\]
These bounds hold in the neighbourhoods $\mathcal{N}_i$ from Assumption~\ref{ass:error}, and the
solver tolerance satisfies $\epsilon_i G_{g,i} \le \tau_{\mathrm{feas}}$.
\end{assumption}
Because $\delta_i(\alpha_i)=O(\alpha_i^{-1})$, the design requirement
$L_{g,i}\bigl(\beta_i+\delta_i(\alpha_i)\bigr)\le\tau_{\mathrm{feas}}$ is enforceable by increasing
$\alpha_i$, shrinking the neighbourhood where constraint blending occurs.

\begin{lemma}[Feasibility Inclusions for Smoothed Constraints]
\label{lem:feasible_inclusion}
Fix $\epsilon\in(0,\tfrac12)$ and define
$\delta_i(\alpha_i)=\alpha_i^{-1}\ln\!\bigl(\tfrac{1-\epsilon}{\epsilon}\bigr)$.
For any $\beta_i\in[0,\delta_i(\alpha_i)]$
Define the feasible sets
\[
	\mathcal S_{\text{orig}}
	:=\{\,x\ \mid\ \forall i,\ p_i(x)\,g_i(x,u)\le 0\,\},
\]
\[
	\mathcal S_{\text{smooth}}^{\tau}
	:=\{\,x\ \mid\ \forall i,\ p_i^{\text{cons}}(x)\,g_i(x,u)\le \tau_{\mathrm{feas}}\,\}.
\]
Assume the solver feasibility tolerance $\tau_{\mathrm{feas}}>0$
(Assumption~\ref{ass:solver_tol}) and the local bounds in
Assumption~\ref{ass:constraint_regularity}, together with
$\epsilon\,G_{g,i}\le\tau_{\mathrm{feas}}$ and
$L_{g,i}\bigl(\beta_i+\delta_i(\alpha_i)\bigr)\le \tau_{\mathrm{feas}}$.
Then
\[
	\mathcal S_{\text{orig}}\ \subseteq\ \mathcal S_{\text{smooth}}^{\tau}
	\ \subseteq\ \mathcal S_{\text{orig}}^{+}(\tau),
\]
where
\[
	\mathcal S_{\text{orig}}^{+}(\tau)
	:=\Bigl\{\,x\ \Big|\ \forall i,\
	p_i(x)\,g_i(x,u)\le {\tau_{\mathrm{feas}}}/{\bar{p}_i(\alpha_i,\beta_i)}\,\Bigr\},
\]
\[
	\bar{p}_i(\alpha_i,\beta_i):=\inf_{h\ge 0} p_i^{\text{cons}}(h)
	=\sigma\!\bigl(-\alpha_i\beta_i\bigr).
\]
In particular, \emph{up to the solver tolerance} the smoothed and original feasibility
tests are numerically indistinguishable.
\end{lemma}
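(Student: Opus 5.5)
The argument works constraint by constraint: both sets are defined by a conjunction over $i$, so it suffices to fix $i$ and a pair $(x,u)$ and prove the two scalar implications
\[
p_i(x)g_i(x,u)\le 0 \;\Rightarrow\; p_i^{\text{cons}}(x)g_i(x,u)\le\tau_{\mathrm{feas}},
\qquad
p_i^{\text{cons}}(x)g_i(x,u)\le\tau_{\mathrm{feas}} \;\Rightarrow\; p_i(x)g_i(x,u)\le \tau_{\mathrm{feas}}/\bar p_i .
\]
The case split is on the sign of $h_i(x)$ and, for $h_i<0$, on whether $x$ lies in the band $|h_i(x)|\le\beta_i+\delta_i(\alpha_i)$. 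The tools are the characterisation of the transition region (Lemma~\ref{lem:transition_region}), applied to the shifted logistic $\sigma(\alpha_i(h_i-\beta_i))$, and the local bounds of Assumption~\ref{ass:constraint_regularity}.

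\textbf{Second inclusion.} This direction is elementary, so I would do it first.
\begin{itemize}
\item If $h_i(x)<0$, then $p_i(x)=0$, and $0\le \tau_{\mathrm{feas}}/\bar p_i$ holds trivially.
\item If $h_i(x)\ge 0$, then $p_i=1$. Since $\sigma$ is increasing, $p_i^{\text{cons}}(x)\ge\sigma(-\alpha_i\beta_i)=\bar p_i>0$. If $g_i\le0$ we are done. If $g_i>0$, then $\bar p_i g_i\le p_i^{\text{cons}}g_i\le\tau_{\mathrm{feas}}$, which gives $g_i\le\tau_{\mathrm{feas}}/\bar p_i$.
\end{itemize}
Along the way I would record the identification $\inf_{h\ge0}\sigma(\alpha_i(h-\beta_i))=\sigma(-\alpha_i\beta_i)$, which is attained at $h=0$.

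\textbf{First inclusion.} Take $x\in\mathcal S_{\text{orig}}$.
\begin{itemize}
\item If $h_i(x)\ge0$, then $g_i\le 0$, so $p_i^{\text{cons}}g_i\le0\le\tau_{\mathrm{feas}}$.
\item If $h_i(x)<0$, then $g_i$ is unconstrained by the original test, and I split further.
\begin{itemize}
\item Far from the surface, i.e.\ $h_i(x)-\beta_i\le-\delta_i(\alpha_i)$: the argument of Lemma~\ref{lem:transition_region}, applied to the shifted argument, gives $p_i^{\text{cons}}(x)\le\epsilon$. Then $p_i^{\text{cons}}g_i\le \epsilon G_{g,i}\le\tau_{\mathrm{feas}}$ by the assumed bound $|g_i|\le G_{g,i}$.
\item Near the surface, i.e.\ $-(\beta_i+\delta_i)<h_i(x)<0$: I would invoke Assumption~\ref{ass:constraint_regularity} to pick $x_i^0$ on $h_i=0$ with $g_i(x,u)\le g_i(x_i^0,u)+L_{g,i}|h_i(x)|$, and then bound $p_i^{\text{cons}}g_i\le \max\{0,g_i\}\le \max\{0,g_i(x_i^0,u)\}+L_{g,i}(\beta_i+\delta_i)$. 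The second term is at most $\tau_{\mathrm{feas}}$ by hypothesis.
\end{itemize}
\end{itemize}

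\textbf{Main obstacle.} The hard step is the near-surface case of the first inclusion. Assumption~\ref{ass:constraint_regularity} only controls $g_i(x,u)$ relative to $g_i(x_i^0,u)$, and nothing in $x\in\mathcal S_{\text{orig}}$ forces $g_i(x_i^0,u)\le0$, because $x_i^0$ is an auxiliary point, not a point of the trajectory. I would handle this by reading Assumption~\ref{ass:constraint_regularity} as saying the mode-$i$ constraint is consistent at the switching surface (trajectories entering mode $i$ arrive with $g_i\le0$ there). I would state this explicitly as the interpretation that makes the blending band harmless.

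As a fallback, I would drop the regularity route and use the crude bound $p_i^{\text{cons}}g_i\le \sigma(\alpha_i(h_i-\beta_i))G_{g,i}$. However, this requires $p_i^{\text{cons}}\le\epsilon$ on the band, which holds only outside the transition region, so the regularity argument is the intended one.

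Combining the cases for each $i$ gives both inclusions. The closing remark follows because $\bar p_i\ge\sigma(-\ln\frac{1-\epsilon}{\epsilon})=\epsilon$ is bounded away from zero once $\beta_i\le\delta_i$, so the outer relaxation is a tolerance-scale enlargement.
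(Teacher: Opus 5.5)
Your second inclusion and the $h_i(x)\ge 0$ half of the first inclusion coincide with the paper's proof; your explicit sign split on $g_i$ is, if anything, more careful. The gap is the case $h_i(x)<0$ of the first inclusion, and you located it correctly. The paper's proof calls this case trivial because $p_i(x)=0$, but that only makes the \emph{original} test vacuous; the smoothed test still carries the weight $p_i^{\text{cons}}(x)>0$. Your patch, $g_i(x_i^0,u)\le 0$ on the surface, is a new hypothesis rather than a reading of Assumption~\ref{ass:constraint_regularity}. Nothing in the stated hypotheses can replace it, because the first inclusion fails for every $\beta_i<\delta_i(\alpha_i)$. In a single-surface example take $g_i\equiv c:=\tau_{\mathrm{feas}}/\epsilon$. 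This satisfies Assumption~\ref{ass:constraint_regularity} with $G_{g,i}=c$ and any sufficiently small $L_{g,i}$, so $\epsilon G_{g,i}\le\tau_{\mathrm{feas}}$ and $L_{g,i}(\beta_i+\delta_i(\alpha_i))\le\tau_{\mathrm{feas}}$ hold. Every $x$ with $h_i(x)<0$ lies in $\mathcal S_{\text{orig}}$, yet
\[
p_i^{\text{cons}}(x)\,c\le\tau_{\mathrm{feas}}
\iff \sigma\bigl(\alpha_i(h_i(x)-\beta_i)\bigr)\le\epsilon
\iff h_i(x)\le\beta_i-\delta_i(\alpha_i).
\]
So states with $\beta_i-\delta_i(\alpha_i)<h_i(x)<0$ belong to $\mathcal S_{\text{orig}}\setminus\mathcal S_{\text{smooth}}^{\tau}$. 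For $\beta_i=0$ and $h_i(x)\to 0^-$ the smoothed residual tends to $c/2=\tau_{\mathrm{feas}}/(2\epsilon)>\tau_{\mathrm{feas}}$. Here $g_i(x_i^0,u)=c>0$, which is exactly the situation you worried about.

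The same computation shows what does work, and it is the fallback you discarded. The transition region of the shifted logistic is $|h_i-\beta_i|<\delta_i(\alpha_i)$. It is disjoint from $\{h_i<0\}$ precisely when $\beta_i=\delta_i(\alpha_i)$, the choice made in Theorem~\ref{thm:approximation_error}. Then $p_i^{\text{cons}}(x)<\sigma(-\alpha_i\delta_i(\alpha_i))=\epsilon$ on all of $\{h_i<0\}$, and $p_i^{\text{cons}}g_i\le\epsilon\max\{0,g_i\}\le\epsilon G_{g,i}\le\tau_{\mathrm{feas}}$ closes the case without $L_{g,i}$ or $x_i^0$. This requires $\max\{0,g_i\}\le G_{g,i}$ on all of $\{h_i<0\}$. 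Assumption~\ref{ass:constraint_regularity} grants that bound only in $\mathcal N_i$, and your far case silently uses the same global bound. A correct statement therefore either fixes $\beta_i=\delta_i(\alpha_i)$ and assumes a global bound on $\max\{0,g_i\}$, or adds your surface-consistency condition as an explicit hypothesis.

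Finally, your closing remark does not follow. At $\beta_i=\delta_i(\alpha_i)$ one has $\bar p_i=\epsilon$, so $\tau_{\mathrm{feas}}/\bar p_i=\tau_{\mathrm{feas}}/\epsilon\ge G_{g,i}$. Hence $\mathcal S_{\text{orig}}^{+}(\tau)$ places no restriction on $\{h_i\ge 0\}\cap\mathcal N_i$. The enlargement is on the scale of the whole range of $g_i$, not of the tolerance.
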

\noindent\textit{Proof.} See Appendix~\ref{app:proofs}.

\begin{remark}[Feasibility consistency]
\label{rem:feas_consistency}
This lemma establishes that if $\beta_i$ and $\alpha_i$ are tuned according to the guidelines, the smoothed feasible set $\mathcal{S}_{\text{smooth}}^{\tau}$ is effectively indistinguishable from the original set $\mathcal{S}_{\text{orig}}$. Specifically, any point feasible for the original problem is feasible for the smoothed problem (up to tolerance), and any point feasible for the smoothed problem lies within the $\tau$-tolerance band of the original constraints. Thus, the solver sees no "phantom" infeasibility nor significant constraint violation.
\end{remark}
	
\begin{remark}[Risks of excessive constraint shifting]
\label{rem:beta_greater_delta}
Although increasing $\beta_i$ enlarges the feasible set $\mathcal S_{\text{smooth}}^{\tau}$ (as implied by Lemma~\ref{lem:feasible_inclusion}), setting $\beta_i > \delta_i(\alpha_i)$ is hazardous because it breaks the tightness of the inclusion $\mathcal S_{\text{smooth}}^{\tau} \subseteq \mathcal S_{\text{orig}}^{+}(\tau)$. Specifically, it creates a ``blind spot'' where the objective mode is active ($p_i^{\text{obj}} \approx 1$) but the constraint strength $\bar{p}_i$ is negligible. This manifests as: (i) \textbf{Tolerance Inflation}: The bound in Lemma~\ref{lem:feasible_inclusion} becomes loose ($1/\bar{p}_i \gg 1/\epsilon$), allowing physically unsafe violations; and (ii) \textbf{Solver Blindness}: Gradients vanish in the gap, preventing the optimizer from detecting and reacting to the approaching constraint. Thus, $\beta_i = \delta_i(\alpha_i)$ is the optimal tradeoff: it maximizes the feasible set size without compromising the strict enforcement guaranteed by Lemma~\ref{lem:feasible_inclusion}.
\end{remark}

\begin{lemma}[Cost function approximation error bound]
\label{lem:cost_function_bound}
Consider a system satisfying Assumption~\ref{ass:system_dynamics}. Given a control trajectory $U$, if
the resulting state trajectory crosses surfaces that satisfy Assumption~\ref{ass:switching_surface},
then the cost difference between the original NMPC problem \eqref{eq:switchMPC} and its smoothed
counterpart \eqref{eq:smoothNMPC} using $p_i^{\text{obj}}(x)$ obeys
\begin{equation}
	|J^{\text{smooth}}(x(t_k),U) - J(x(t_k),U)| \leq \gamma(\alpha_{\min}^{-1}),
\end{equation}
where $\alpha_{\min} := \min_{i=1,\ldots,M} \alpha_i$,
$C_i := \sup\{\|l_i(x,u)\| : |h_i(x)|\le \delta_i(\alpha_i),\, u\in\mathbb{U}\}<\infty$,
$c_i := \frac{2K_i C_i}{{\nu}_i}\ln\frac{1-\epsilon}{\epsilon}$, and the class-$\mathcal{K}$ function
$\gamma(r)$ is defined explicitly as $\gamma(r) := \bigl(\sum_{i=1}^M c_i\bigr) r$.
\end{lemma}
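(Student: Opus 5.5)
We plan to bound the difference of the two costs term by term. Fix $U$ and its state trajectory $x(\cdot)$ on $[t_k,t_k+N\Delta]$, which exists and is unique by Assumption~\ref{ass:system_dynamics}. We read the original cost \eqref{eq:switchMPC} in indicator form, $l_{\mu(x)}=\sum_{i}p_i(x)\,l_i(x,u)$, consistent with how \eqref{eq:smoothNMPC} was derived from it. The difference of the stage costs is then
\[
\int_{t_k}^{t_k+N\Delta}\sum_{i=1}^M \bigl(p_i^{\text{obj}}(x(\tau))-p_i(x(\tau))\bigr)\,l_i(x(\tau),u(\tau))\,\mathrm{d}\tau .
\]
We bound each summand separately and then sum over $i$. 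Because $\delta_i(\alpha_i)=\alpha_i^{-1}\ln\frac{1-\epsilon}{\epsilon}\le \alpha_{\min}^{-1}\ln\frac{1-\epsilon}{\epsilon}$, the result takes the linear form $\gamma(\alpha_{\min}^{-1})$. This $\gamma$ is clearly class-$\mathcal K$, since it is linear with a nonnegative coefficient.

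For fixed $i$ we split the horizon into the visit set $\{\tau: |h_i(x(\tau))|<\delta_i(\alpha_i)\}$ and its complement.

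\emph{On the visit set}, both $p_i^{\text{obj}}$ and $p_i$ take values in $[0,1]$, so $|p_i^{\text{obj}}-p_i|\le 1$. Since $|h_i|\le\delta_i$ there, $\|l_i\|\le C_i$. By Lemma~\ref{lem:dwell_time}, valid provided $\delta_i(\alpha_i)\le\bar\delta_i$, i.e.\ for $\alpha_i$ large enough, the measure of this set is at most $2K_i\delta_i/\nu_i$. This piece therefore contributes at most
\[
C_i\,\frac{2K_i}{\nu_i}\,\frac{1}{\alpha_i}\ln\frac{1-\epsilon}{\epsilon}=c_i\,\alpha_i^{-1}\le c_i\,\alpha_{\min}^{-1}.
\]

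\emph{On the complement}, Lemma~\ref{lem:transition_region} gives $|p_i^{\text{obj}}-p_i|\le\epsilon$. The residual contribution is $\epsilon$ times the integral of $\|l_i\|$ over the horizon. This residual is the main obstacle, because it is not of the form $c_i\alpha_{\min}^{-1}$ as stated.

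Outside the band the logistic error actually decays like $e^{-\alpha_i|h_i|}$, and transversality controls how long the trajectory stays at a given small $|h_i|$. We would exploit both facts. Concretely, on the portion with $|h_i|\le\bar\delta_i$, we change variables from time to $s=h_i(x(\tau))$, using $|\mathrm d s/\mathrm d\tau|\ge\nu_i$ at most $K_i$ times. This bounds the error there by
\[
\frac{K_i}{\nu_i}\int_{\mathbb R}\bigl|\sigma(\alpha_i s)-H(s)\bigr|\,\mathrm ds=\frac{2K_i\ln 2}{\nu_i\alpha_i},
\]
again $O(\alpha_i^{-1})$. Where $|h_i|>\bar\delta_i$, the error is at most $e^{-\alpha_i\bar\delta_i}$, which is also $O(\alpha_i^{-1})$. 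If one prefers the stated constant exactly, one can instead absorb the $\epsilon$-tail via Assumption~\ref{ass:error} ($\epsilon\le\tau_{\mathrm{feas}}$), interpreting it as below solver resolution. We would present the band estimate as the main term and remark on this tail.

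The terminal term is handled the same way. At the single time $t_k+N\Delta$ we have $|p_i^{\text{obj}}-p_i|\,|F_i|\le\epsilon\sup|F_i|$ outside the band. Inside the band we would either use a bounded sup of $F_i$, or treat the terminal point as lying outside the band, which is generic under transversality. Summing over $i$ yields $\bigl(\sum_i c_i\bigr)\alpha_{\min}^{-1}$ plus these tolerance-level residuals, which completes the plan.
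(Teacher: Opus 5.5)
Your band estimate is exactly the paper's entire proof: you bound $|p_i^{\text{obj}}-p_i|$ by $1$ on $\{|h_i|<\delta_i(\alpha_i)\}$, bound $\|l_i\|$ by $C_i$, and bound the measure of that set by Lemma~\ref{lem:dwell_time}, which gives $c_i\alpha_i^{-1}\le c_i\alpha_{\min}^{-1}$. The paper stops there because it reads Lemma~\ref{lem:transition_region} as locating the set where $p_i^{\text{obj}}\neq p_i$, i.e.\ it silently sets the discrepancy to zero off the band. It never mentions the terminal term, and it invokes Lemma~\ref{lem:dwell_time} without the proviso $\delta_i(\alpha_i)\le\bar\delta_i$ that you correctly add.

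Your suspicion about the residuals is right, and the obstruction lies in the statement, not in your plan: with this $\gamma$ the inequality is false. So neither of your closing moves can become a proof. Absorbing the $\epsilon$-tail into $\tau_{\mathrm{feas}}$ changes the claim. ``The terminal point is generically outside the band'' is an extra hypothesis. Inside the band, a bounded sup of $F_i$ only yields an $O(1)$ term, since $|\sigma(0)-H(0)|=\tfrac12$. For a concrete counterexample, take $\dot x=1$, $h_1(x)=x$, $l_1\equiv0$, $F_1\equiv1$. Then $C_1=0$ and $\gamma\equiv0$, yet $J-J^{\mathrm{smooth}}=H(x_N)-\sigma(\alpha_1x_N)\neq0$ for every $\alpha_1$, where $x_N:=x(t_k+N\Delta)$. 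Even with all $F_i\equiv0$, the statement fails. Take $l_1\ge0$ vanishing on $\{h_1\ge-\eta\}$ but not along the part of the trajectory where $h_1<-\eta$. Then $C_1=0$ once $\delta_1(\alpha_1)\le\eta$, while the running-cost difference $\int\sigma(\alpha_1h_1)\,l_1\,\mathrm{d}\tau$ is strictly positive.

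What your change-of-variables argument does prove is the correct replacement, once two details are fixed.
\begin{itemize}
\item On $\{|h_i|\le\bar\delta_i\}$, each monotone passage contributes at most $\hat C_i\nu_i^{-1}\int_{\mathbb{R}}|\sigma(\alpha_is)-H(s)|\,\mathrm{d}s=2\ln2\,\hat C_i/(\nu_i\alpha_i)$. Here $\hat C_i$ must be the sup of $\|l_i\|$ over that wider band. You dropped this factor, and $C_i$, taken only over $|h_i|\le\delta_i(\alpha_i)$, does not dominate it.
\item The number of passages is at most $K_i+2$ once partial passages at the two ends of the horizon are counted.
\end{itemize}
Off that band the error is at most $N\Delta\,\tilde C_i\,\sigma(-\alpha_i\bar\delta_i)\le N\Delta\,\tilde C_i/(e\,\alpha_i\bar\delta_i)$, with $\tilde C_i$ a bound on $\|l_i\|$ over the finite-horizon reachable set. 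Together these give a linear class-$\mathcal K$ bound $\tilde\gamma(\alpha_{\min}^{-1})$ with a different constant; the $\epsilon$-dependent $c_i$ plays no essential role. Adding a terminal hypothesis makes it a true lemma, for example $F_i\equiv0$, or $|h_i(x_N)|\ge\eta$, which contributes an extra $\sup|F_i|\,e^{-\alpha_i\eta}$. The proof of Theorem~\ref{thm:approximation_error} uses the lemma only as a uniform-in-$U$ bound that vanishes as $\alpha_{\min}\to\infty$, so $\tilde\gamma$ can be substituted there directly.
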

\noindent\textit{Proof.} See Appendix~\ref{app:proofs}.

The feasible-set inclusions and cost bound combine into the following consequence for practical solvers.

\begin{theorem}[Approximation Error of Smoothed NMPC]
\label{thm:approximation_error}
Suppose (i) Assumptions \ref{ass:system_dynamics} and \ref{ass:switching_surface} hold;
(ii) the original NMPC \eqref{eq:switchMPC} satisfies LICQ and SOSC at its optimizer $U^\ast(x)$
and thus enjoys local second-order growth: there exist $\mu>0$ and $r>0$ such that for all feasible
$U$ with $\|U-U^\ast\|\le r$,
\[
  J(x,U)-J(x,U^\ast)\ \ge\ \tfrac{\mu}{2}\,\|U-U^\ast\|^2;
\]
(iii) offsets are chosen as $\beta_i=\delta_i(\alpha_i)$ so that
Lemma~\ref{lem:feasible_inclusion} yields
$\mathcal{S}_{\text{orig}} \subseteq \mathcal{S}_{\text{smooth}}^{\tau}(\alpha,\beta)
\subseteq \mathcal{S}_{\text{orig}}^{+}(\tau,\alpha,\beta)$ with
$\bar{p}_i(\alpha_i,\beta_i)=1-\epsilon_i$.
Then, given the same initial state $x$,
\begin{equation}
  \|U^{\text{smooth}*}(x) - U^*(x)\|
  \ \le\ \sqrt{\frac{4\,\gamma(\alpha_{\min}^{-1})}{\mu}}.
\end{equation}
\end{theorem}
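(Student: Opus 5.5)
The argument is the standard perturbation estimate for minimizers of a function with quadratic growth. A uniformly $\gamma$-close perturbation moves the minimizer by at most $O(\sqrt{\gamma})$. Throughout we write $\gamma:=\gamma(\alpha_{\min}^{-1})$ and abbreviate $U^{s}:=U^{\text{smooth}*}(x)$.

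\textbf{Step 1: a near-optimality chain.} The goal is to show that $U^s$ is $2\gamma$-suboptimal for the original problem,
\[
J(x,U^s)-J(x,U^\ast)\le 2\gamma .
\]
By Lemma~\ref{lem:cost_function_bound}, applied to every admissible control sequence (each generates a trajectory that crosses surfaces satisfying Assumption~\ref{ass:switching_surface}), we have $|J^{\text{smooth}}(x,U)-J(x,U)|\le\gamma$ uniformly in $U$. By hypothesis (iii), Lemma~\ref{lem:feasible_inclusion} gives $\mathcal S_{\text{orig}}\subseteq\mathcal S^{\tau}_{\text{smooth}}$, so $U^\ast$ is feasible for the smoothed problem. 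Optimality of $U^s$ then yields $J^{\text{smooth}}(x,U^s)\le J^{\text{smooth}}(x,U^\ast)$. Chaining the three facts,
\[
J(x,U^s)\le J^{\text{smooth}}(x,U^s)+\gamma\le J^{\text{smooth}}(x,U^\ast)+\gamma\le J(x,U^\ast)+2\gamma .
\]

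\textbf{Step 2: invoking second-order growth.} To apply the growth inequality in (ii), $U^s$ must be feasible for the original problem and must lie in the ball $\|U-U^\ast\|\le r$.

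For feasibility, the second inclusion $\mathcal S^{\tau}_{\text{smooth}}\subseteq\mathcal S^{+}_{\text{orig}}(\tau)$ with $\bar p_i=1-\epsilon_i$ shows that $U^s$ violates the original constraints by at most $\tau_{\mathrm{feas}}/(1-\epsilon_i)$. This is the solver tolerance up to a factor less than $2$. Consistent with Assumption~\ref{ass:solver_tol}, we treat such points as feasible at solver resolution and read the growth condition on this tolerance-enlarged feasible set.

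For localization, the standard device is to compare against the restricted smoothed problem posed on the ball $\{\|U-U^\ast\|\le r\}$, where $U^s$ is understood as a local minimizer. Once the final bound $\sqrt{4\gamma/\mu}<r$ holds for $\alpha_{\min}$ large enough, the minimizer lies in the interior of the ball, and the restriction is harmless.

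\textbf{Step 3: the estimate.} With both conditions in place, the growth condition together with Step 1 gives
\[
\tfrac{\mu}{2}\|U^s-U^\ast\|^2\le J(x,U^s)-J(x,U^\ast)\le 2\gamma,
\]
hence $\|U^s-U^\ast\|\le\sqrt{4\gamma/\mu}$, which is the claimed bound.

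\textbf{Main obstacle.} The difficulty is Step 2, namely justifying the use of the growth inequality at a point that is only $\tau$-feasible and may not a priori be in the ball. I would first handle it by the localization argument above, assuming $\alpha_{\min}$ is large enough that $4\gamma/\mu<r^2$. The small infeasibility would be absorbed into solver tolerance, in the sense of the statement of Lemma~\ref{lem:feasible_inclusion}. A cleaner alternative is to use LICQ to build a feasible point $\hat U$ with $\|\hat U-U^s\|=O(\tau_{\mathrm{feas}})$ and apply the growth inequality to $\hat U$. This would introduce an extra $O(\tau_{\mathrm{feas}})$ term, which is then neglected at solver resolution.
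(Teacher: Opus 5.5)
Your Steps 1 and 3 are the paper's proof. The appendix uses the left inclusion $\mathcal S_{\text{orig}}\subseteq\mathcal S^{\tau}_{\text{smooth}}$ to make $U^\ast$ admissible for the smoothed problem. It then runs the same three-link chain through Lemma~\ref{lem:cost_function_bound} to get $J(x,U^{\text{smooth}\ast})\le J(x,U^\ast)+2\gamma$, and inserts $U^{\text{smooth}\ast}$ into the growth inequality. Its only counterpart to your Step 2 is the closing sentence ``one restricts $\alpha_i$ so that the bound holds within the radius $r$.'' That sentence does not explain how $U^{\text{smooth}\ast}$ gets into the ball before the bound is derived. Your restricted-ball device is the coherent version of it, with the caveat you state yourself: it controls a local minimizer of the smoothed problem near $U^\ast$. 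It does not control the global one unless you add a uniform cost gap outside $B_r(U^\ast)$.

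The feasibility part of Step 2 does not work as written. You take $\bar p_i=1-\epsilon_i$ from hypothesis (iii). With the paper's own definitions, however, $\beta_i=\delta_i(\alpha_i)$ gives $\alpha_i\beta_i=\ln\frac{1-\epsilon}{\epsilon}$, hence $\bar p_i=\sigma(-\alpha_i\beta_i)=\epsilon_i$. This matches the tuning guideline that places the $p_i^{\text{cons}}\approx\epsilon$ contour on $h_i=0$, and Remark~\ref{rem:beta_greater_delta}'s comparison of $1/\bar p_i$ with $1/\epsilon$.

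The right inclusion therefore bounds the violation only by $\tau_{\mathrm{feas}}/\epsilon_i$. That is a tolerance inflation by the factor $1/\epsilon_i$, and it is at least $1$ because Assumption~\ref{ass:error} imposes $\epsilon_i\le\tau_{\mathrm{feas}}$. It is not ``solver tolerance up to a factor less than $2$.'' Your LICQ repair has the same problem: it would move $U^{\text{smooth}\ast}$ by an amount of order $\tau_{\mathrm{feas}}/\epsilon_i$, not $O(\tau_{\mathrm{feas}})$, so the extra term cannot be neglected.

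The paper never uses the right inclusion. It simply applies (ii) at $U^{\text{smooth}\ast}$ without checking that this point is feasible for \eqref{eq:switchMPC}. In both versions, then, the claim that $U^{\text{smooth}\ast}$ is eligible for the growth inequality does not follow from (iii). It is an additional hypothesis, for example that $U^{\text{smooth}\ast}$ is feasible for the original problem, or that (ii) holds on the enlarged set $\mathcal S^{+}_{\text{orig}}(\tau)$. You should state it explicitly rather than derive it from $\bar p_i=1-\epsilon_i$.
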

\noindent\textit{Proof.} See Appendix~\ref{app:proofs}.

\begin{remark}[$\epsilon$-Suboptimality]
\label{rem:epsilon_suboptimality}
Theorem~\ref{thm:approximation_error} and Lemma~\ref{lem:cost_function_bound} formally characterize the proposed method as an $\epsilon$-suboptimal approximation scheme. The bound $2\gamma(\alpha_{\min}^{-1})$ represents the duality gap introduced by smoothing: as $\alpha \to \infty$, the gap vanishes. For finite $\alpha$ (e.g., $\alpha \approx 50$), the solution is ``$\epsilon$-optimal'' with respect to the original nonsmooth problem, providing a rigorous justification for using the smoothed control in practice.
\end{remark}

\begin{figure}[ht]
\centering
\includegraphics[width=\columnwidth]{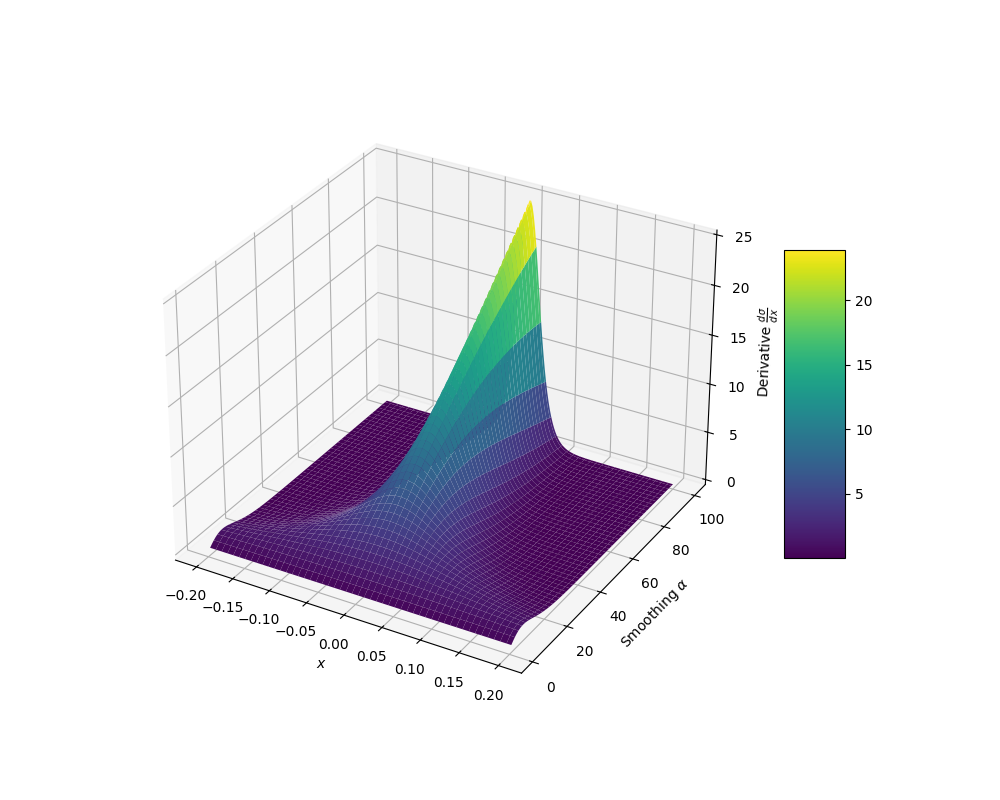}
\caption{Gradient landscape of the smoothing function derivative $\frac{d\sigma}{dx} = \alpha \sigma(1-\sigma)$. As $\alpha$ increases, the derivative transforms into a numerical Dirac spike, illustrating the trade-off between approximation accuracy and numerical tractability.}
\label{fig:gradient_spike}
\end{figure}

\begin{remark}[Numerical stability and practical tuning]
The numerical failure at high $\alpha$ can be explained by analyzing the gradient landscape of the smoothed constraints. The derivative of the switching function is given by $\nabla_x p(x) \propto \alpha \sigma(\cdot)(1-\sigma(\cdot))$. As illustrated in Figure~\ref{fig:gradient_spike}, increasing $\alpha$ transforms the gradient into a narrow, high-amplitude impulse (approximating a Dirac delta). This creates two numerical issues for the NLP solver: (i) \textit{Gradient Vanishing (Step-over)}: When the transition region width ($\sim 1/\alpha$) becomes smaller than the solver's current line-search step size, the algorithm may ``step over'' the switching surface entirely, seeing zero gradients on both sides. (ii) \textit{Gradient Explosion \& Ill-conditioning}: If an iterate lands exactly within the narrow transition band, the Hessian terms scale with $\alpha^2$. For very large $\alpha$ (e.g., $\alpha > 1000$), this introduces extreme curvature, causing the Condition Number of the KKT matrix to spike. This ill-conditioning triggers the restoration failures observed in Section~\ref{subsec:toy_verification}.
\end{remark}

\subsection{Practical Tuning Guidelines}
\label{subsec:tuning_guidelines}
The theoretical analysis yields three practical guidelines for selecting smoothing parameters:

\begin{enumerate}
  \item \textbf{Selection of Smoothing Gain $\alpha_i$ (The Accuracy-Stability Trade-off):}
  Theoretically, selecting $\alpha_i$ is a trade-off governed by the error bound $\gamma(\alpha_{\min}^{-1})$.
  \textit{Lower Bound (Accuracy):} $\alpha_i$ must be sufficiently large to minimize the optimality gap. Our analysis shows that $\alpha_i < 10$ often leads to excessive conservatism (premature constraint activation).
  \textit{Upper Bound (Stability):} Critically, $\alpha_i$ must be bounded to prevent numerical breakdown. As illustrated in Figure~\ref{fig:gradient_spike}, excessively large gains ($\alpha_i > 100$) create needle-like gradients that cause gradient-based solvers to fail due to search step-over (vanishing gradients) or Hessian ill-conditioning (exploding gradients).
  \textit{Recommendation:} We recommend a range of $\alpha_i \in [10, 100]$. In our experiments, $\alpha_i \approx 50$ consistently provided negligible approximation error ($<0.1\%$) while maintaining robust solver convergence.

  \item \textbf{Alignment of Constraint Shift $\beta_i$ (Feasibility Guarantee):}
  To ensure the smoothed problem does not violate the original hard constraints, the shift parameter should be set as $\beta_i = \delta_i(\alpha_i) = \frac{1}{\alpha_i}\ln(\frac{1-\epsilon}{\epsilon})$. This setting aligns the $p_i^{\text{cons}} \approx \epsilon$ contour with the original switching surface $h_i(x)=0$. Setting $\beta_i < \delta_i(\alpha_i)$ introduces a safety margin $\eta_i > 0$ for additional conservatism under uncertainty. Setting $\beta_i > \delta_i(\alpha_i)$ is not recommended as it may permit slight violations of the original constraints. (See Remark~\ref{rem:beta_greater_delta})

  \item \textbf{Solver Tolerance Matching:}
  The transition width parameter $\epsilon$ should be chosen compatible with the NLP solver's feasibility tolerance $\tau_{\text{feas}}$. Selecting $\epsilon \approx \tau_{\text{feas}}$ (e.g., $10^{-4}$ to $10^{-6}$) ensures that the ``blurred'' region of the constraint is numerically indistinguishable from the strict boundary by the solver logic.
\end{enumerate}

\subsection{Illustrative Example: Numerical Verification}
\label{subsec:toy_verification}
To strictly validate the theoretical bounds derived in Section 4.1 and demonstrate the practical impact of the smoothing parameters, we verify the framework on a constrained double integrator system. It is chosen because this linear benchmark allows for the computation of a global optimum, serving as a rigorous ground truth for error analysis.


The system models a mass ($m=1$) transitioning between a high-speed zone and a restricted-speed zone. The dynamics and optimal control problem are formulated as:
\begin{subequations}
\begin{align}
    \min_{u(t)} \quad & J = \int_{0}^{T} \left( (v(t) - 10)^2 + 0.1 u(t)^2 \right) dt \\
    \text{s.t.} \quad & \dot{p}(t) = v(t), \quad \dot{v}(t) = u(t), \\
    & p(0) = 0, \quad v(0) = 0, \\
    & u(t) \in [-1, 1], \\
    & v(t) \leq \begin{cases} 10.0 & \text{if } p(t) < 5 \quad (\text{High-Speed Zone}) \\ 1.0 & \text{if } p(t) \ge 5 \quad (\text{Restricted Zone}) \end{cases} \label{eq:velocity_constraint}
\end{align}
\end{subequations}
In the smoothed NMPC formulation, the state-dependent velocity constraint (\ref{eq:velocity_constraint}) is replaced by the sigmoid-blended approximation:
\begin{equation}
    v(t) \leq 10.0 (1 - \sigma(\alpha(p(t) - 5 - \beta))) + 1.0 \sigma(\alpha(p(t) - 5 - \beta))
\end{equation}

Since the system dynamics are linear and the cost is quadratic, the original non-smooth problem can be formulated exactly as a \textbf{Mixed-Integer Quadratic Programming (MIQP)} problem using the Big-M method to model the logical switching. We solved this MIQP to global optimality using \textbf{Gurobi} with a fine discretization ($N=80$) to obtain the exact benchmark trajectories $u^*$.


The verification results are summarized in Figure~\ref{fig:verification_suite}, confirming the four key theoretical properties derived in Section 4.1:

\begin{itemize}
    \item \textbf{Panel A (Transition Dwell Time):} The time spent in the transition region ($\epsilon$-band) is plotted against the smoothing gain $\alpha$. The data perfectly follows a $\tau \propto 1/\alpha$ trend (log-log slope $\approx -1$), empirically validating \textbf{Lemma 2}. This confirms that increasing $\alpha$ linearly reduces the ``blurring'' window of the switch.
    \item \textbf{Panel B (Feasibility \& Shift Tuning):} Comparing the smoothed solutions against the hard limit $v \le 1.0$ (for $p \ge 5$) reveals the necessity of the parameter $\beta$. The untuned case ($\beta=0$, red line) violates the constraint. In contrast, using our derived tuning rule $\beta_i = \delta_i(\alpha_i)$ (green line) results in reduced violation which rapidly drops to a negligible level ($\sim 10^{-7}$) as $\alpha$ increases, validating the inclusion logic in \textbf{Lemma 3}.
    \item \textbf{Panel C (Cost Error):} The \textit{absolute} cost difference $|J^{\mathrm{smooth}} - J^*|$ decays as $O(1/\alpha)$, confirming \textbf{Lemma 4}. Note that the theoretical upper bound (dashed line) correctly captures the convergence rate (-1 slope) but is numerically conservative (an order of magnitude larger) due to worst-case Lipschitz constant estimates.
    \item \textbf{Panel D (Input Convergence):} The $L_2$-norm difference $\|U^{\mathrm{smooth}} - U^*\|$ decreases monotonically, consistent with \textbf{Theorem 1}. However, for $\alpha > 100$, the solver fails to converge within the iteration limit due to the ill-conditioning described in Remark 4. This confirms that while theoretical error vanishes as $\alpha \to \infty$, practical limits impose a finite ''sweet spot'' for tuning.
\end{itemize}

\begin{figure}[H]
\centering
\includegraphics[width=\textwidth]{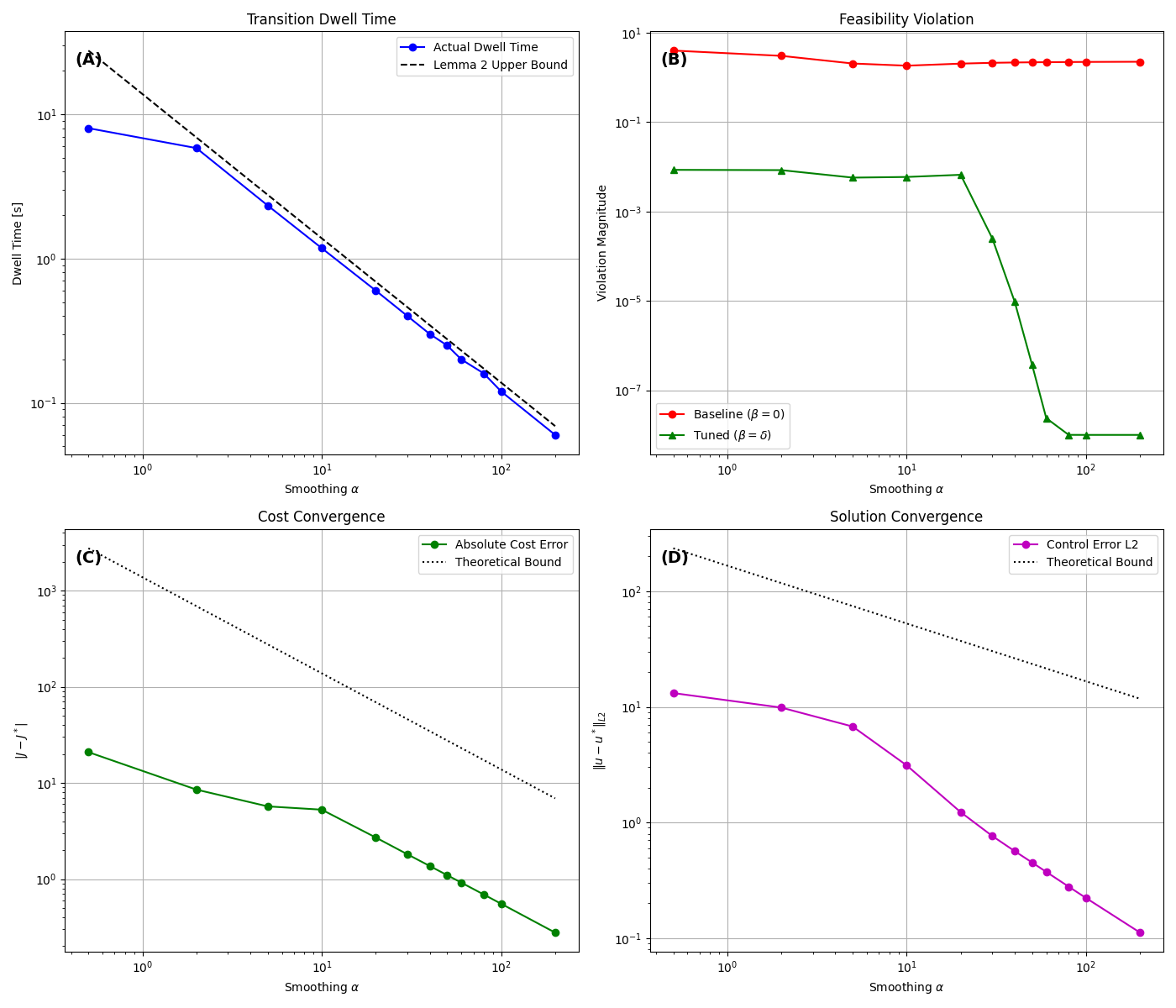}
\caption{Numerical verification of theoretical bounds. The four panels are arranged in a $2\times2$ grid: \textbf{(A, top-left)} transition dwell time decays as $\alpha$ increases; \textbf{(B, top-right)} feasibility violation is eliminated by the tuning rule $\beta=\delta$; \textbf{(C, bottom-left)} evaluation cost error converges to the global optimum; \textbf{(D, bottom-right)} control trajectory converges in $L_2$-norm. The panel labels (A)--(D) are printed inside each subplot.}
\label{fig:verification_suite}
\end{figure}

\section{Industrial Polymerization Process Control Case Study}
\label{sec:case}
\subsection{Process Description and Modeling}
The case study targets an operating gas--liquid polymerization train that cycles through several commercial grades on a 24~h schedule \cite{li2024benchmark}. Pronounced nonlinearities, tight thermal coupling, and mode-dependent safety limits make the plant a stringent NMPC benchmark.
\begin{figure}[H]
	\centering
	\includegraphics[width=0.8\columnwidth]{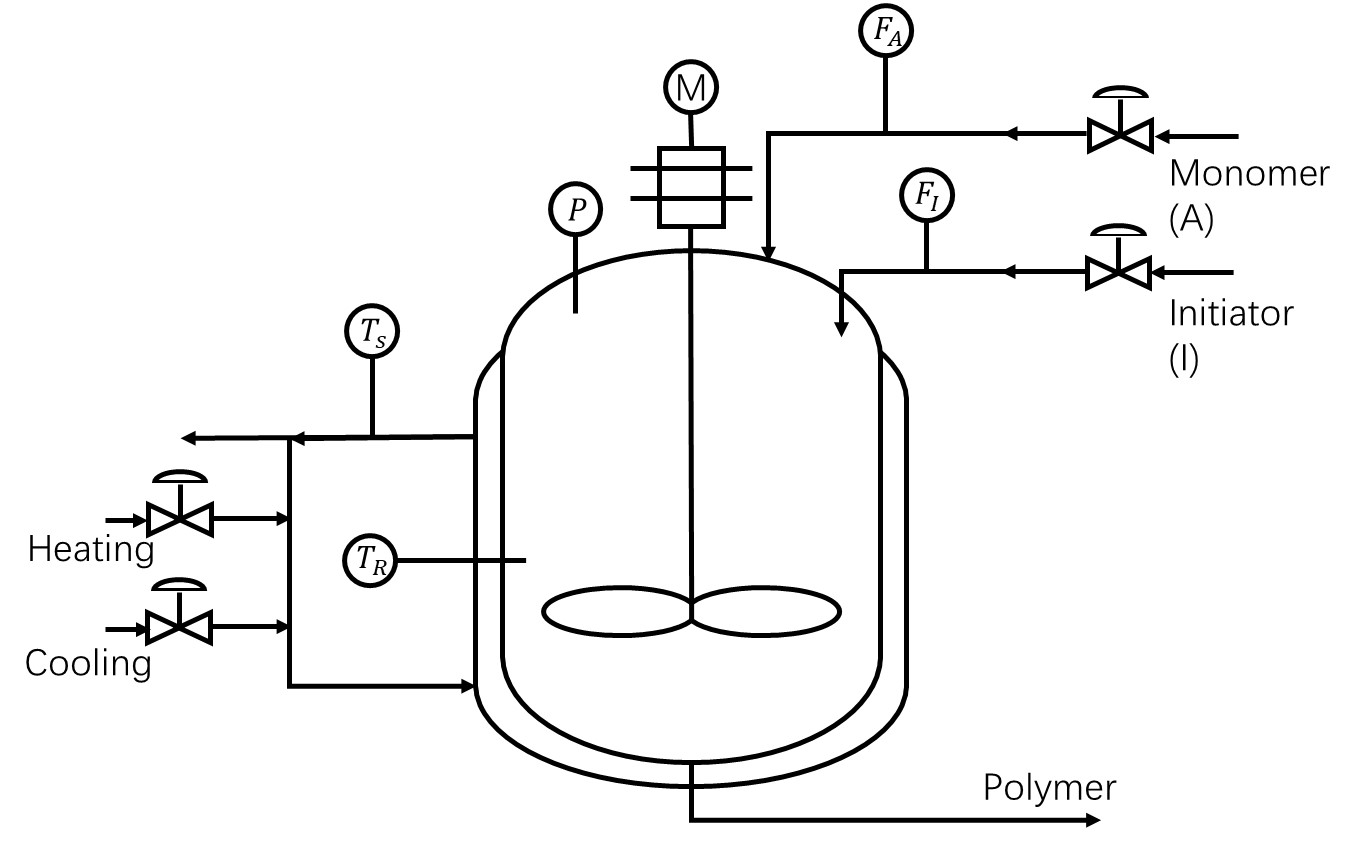}
	\caption{Industrial batch polymerization reactor with gas-liquid two phase}
	\label{fig:process}
\end{figure}

Each batch traverses three distinct modes: start-up heating and pressurisation, an isothermal polymerisation hold, and a finishing depressurisation that preserves temperature. Online quality indicators trigger feed retuning at every batch. The core equipment is a stirred-tank reactor with recirculation and a split-range jacket loop that alternates hot and cold utilities to coordinate monomer and initiator dosing with thermal control.

Control design uses the mechanistic model of Li et al.~\cite{li2024benchmark}, which describes the evolution of polymer chain moments, reactor temperature, and pressure through coupled mass and energy balances. Detailed reaction mechanics, differential equations, and model parameters are provided in the Supplementary Material. Here, we summarize the control-relevant variables and operational phases.

The reactor state vector $x \in \mathbb{R}^{12}$ collects polymer moments ($c_{\lambda_0}, c_{I^*}, c_A, c_{I_2}, c_B$), temperatures ($T_{\rm r}, T_{\rm J}$), gas phase variables ($N_{\rm gA}, P$), and cumulative inventories ($M_{A_{in}}, M_{B_{in}}, V_{\rm l}$):
\begin{equation}
\label{eq:state_def}
x = [c_{\rm \lambda_0}, c_{\rm I^*}, c_{\rm A}, c_{\rm I_2}, c_{\rm B}, T_{\rm r}, T_{\rm J}, N_{\rm gA}, P, M_{A_{in}}, M_{B_{in}}, V_{\rm l}]^T
\end{equation}
Only five variables are measured online:
\begin{equation}
\label{eq:output_def}
y = [T_{\rm r}, T_{\rm J}, P, M_{A_{in}}, M_{B_{in}}]^T.
\end{equation}
The remaining states are reconstructed via an Extended Kalman Filter (Section~\ref{subsec:ekf}). The manipulated inputs $u \in \mathbb{R}^3$ are the monomer feed $F_{\rm A}$, initiator feed $F_{\rm I_2}$, and jacket split-range position $\psi$:
\begin{equation}
\label{eq:input_def}
u = [F_{\rm A}, F_{\rm I_2}, \psi]^T.
\end{equation}
These inputs are implemented as piecewise-constant trajectories matching the NMPC discretization.

\subsection{Description of the Control Problem}
The controller seeks the fastest batch that respects safety and product specifications by manipulating $F_{\rm A}$, $F_{\rm I_2}$, and $\alpha$. Operation is divided into three modes: start-up ramps $T_{\rm r}$ and $P$ to $T_{\rm r,sp}=351.15~\text{K}$ and $P_{\rm sp}=1.5~\text{MPa}$ while polymerization begins; the holding phase maintains $T_{\rm r}$ within $\pm1~\text{K}$ and $P$ within $\pm0.1~\text{MPa}$ until $M_{\rm A,in}=\int_0^{t_f}F_{\rm A}\,\mathrm{d}t=3250~\text{kg}$ is delivered; and finishing stops the monomer feed, keeps temperature on setpoint, and relieves pressure to $P^{\rm end}=1~\text{MPa}$. The initiator charge $M_{\rm B,in}=0.002~\text{kg}$ can be distributed across any stage, adding a degree of freedom. Tables~\ref{tab:x} and \ref{tab:u} summarize state and input limits together with initial conditions.
Absolute pressures in Tables~\ref{tab:x} are retained in Pascals to match plant instrumentation, whereas the performance metrics reported in Section~\ref{sec:case} use MPa for clarity when discussing deviations.

\begin{table}[!hb]
	\centering
	\caption{Initial conditions and state constraints.}
	\label{tab:x}
	\begin{tabular}{lllll}
		\hline
		State                          & Init. cond. & Min.   & Max.    & Unit \\ \hline
		$(c_{\rm \lambda_0}V_{\rm l})$ & 0           & 0      & Inf     & mol  \\
		$(c_{\rm I^*} V_{\rm l})$      & 0           & 0      & Inf     & mol  \\
		$(c_{\rm A} V_{\rm l})$        & 914.2858    & 0      & Inf     & mol  \\
		$(c_{\rm I_2} V_{\rm l})$      & 0.0003632   & 0      & Inf     & mol  \\
		$(c_{\rm B} V_{\rm l})$        & 8.4         & 0      & Inf     & mol  \\
		$T_{\rm r}$                    & 340         & 320.15 & 373.15  & K    \\
		$T_{\rm J}$                    & 340         & 283.15 & 373.15  & K    \\
		$N_{\rm gA}$                   & 113.2038    & 0      & 10000   & mol  \\
		$P$                            & 160000      & 150000 & 1600000 & Pa   \\
		$V_{\rm l}$                    & 4000        & 2000   & 6000    & L    \\ \hline
	\end{tabular}
\end{table}

\begin{table}[!h]
	\centering
	\caption{Bounds on the manipulated variables.	}
	\label{tab:u}
	\begin{tabular}{llll}
		\hline
		Control       & Min. & Max.  & Unit \\ \hline
		$F_{\rm A}$   & 0    & 0.694 & kg/s \\
		$F_{\rm I_2}$ & 0    & 4$\times10^{-7}$ & kg/s \\
		$\psi$      & -1   & 1     & -    \\ \hline
	\end{tabular}
\end{table}

Measurements and associated sensor noise are summarized in Table~\ref{tab:y}.
\begin{table}[!h]
	\centering
	\caption{Measurement variables and measurement noise}
	\label{tab:y}
	\begin{tabular}{llll}
		\hline
		Variable & Maximum error & Unit & Noise distribution \\
		\hline
		$T_{\rm r}$    & 0.1           & K    & Gaussian           \\
		$T_{\rm J}$    & 0.1           & K    & Gaussian           \\
		$P$            & 5000          & Pa   & Gaussian           \\
		$M_{\rm A,in}$ & 0.1           & kg   & Gaussian           \\
		$M_{\rm B,in}$ & 1.e-09      & kg   & Gaussian           \\
		\hline
	\end{tabular}
\end{table}

\subsection{Cost Functions and Constraints}
As described in the previous subsection and illustrated in Figure~\ref{fig:mode_sketch}, the batch advances through heating/pressurizing, holding, and finishing stages that are triggered by state-dependent switching rules. Figure~\ref{fig:mode_sketch} sketches the idealized evolution of the key process variables ($T_{\rm r}$ and $P$) alongside the associated phases and switching conditions.

\begin{figure}[H]
    \centering
    \includegraphics[width=0.85\columnwidth]{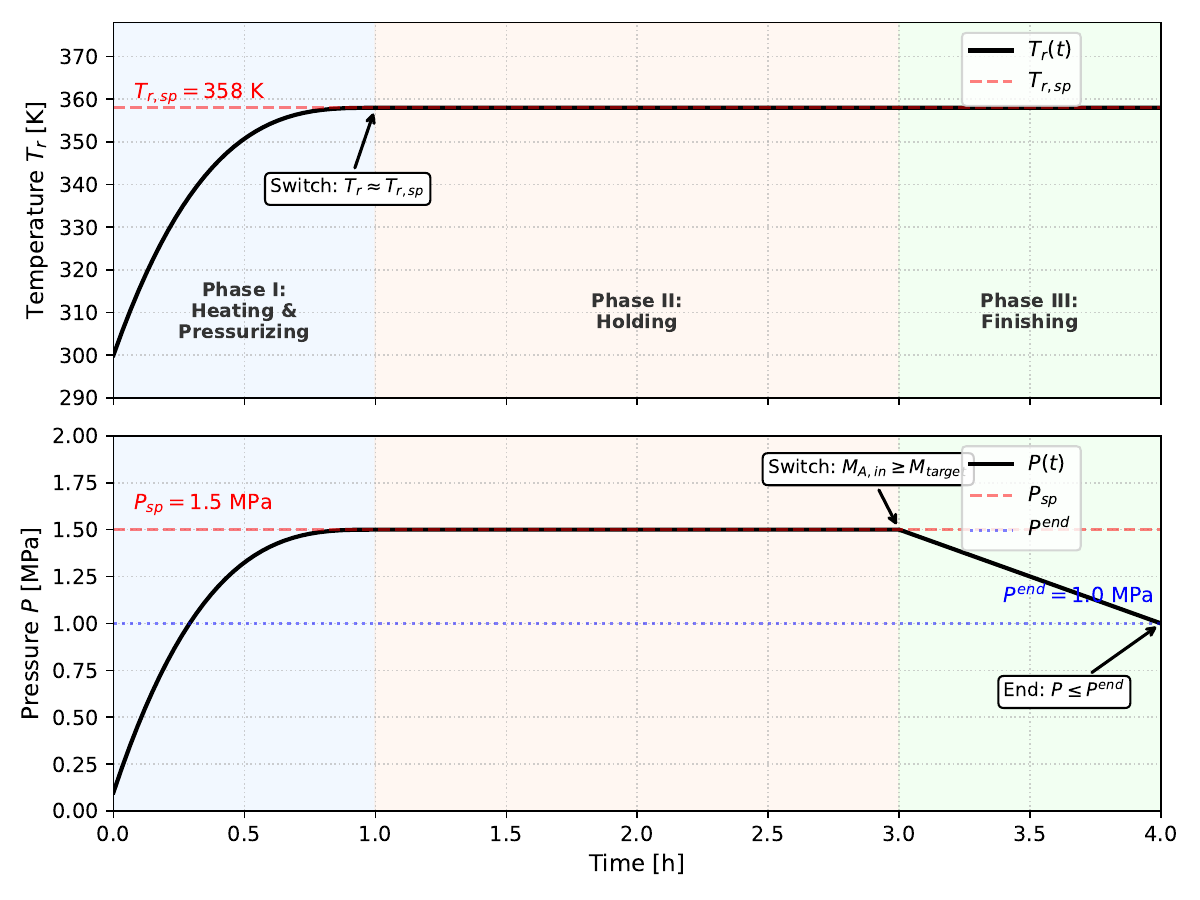}
    \caption{Schematic of the mode transition logic. The process cycles through three phases: (I) Heating and Pressurizing until setpoints ($T_{\rm r,sp}, P_{\rm sp}$) are reached; (II) Holding until the monomer dosage target is met; and (III) Finishing, where pressure drops to $P^{\rm end}$ while temperature is maintained.}
    \label{fig:mode_sketch}
\end{figure}

Each stage uses a dedicated running cost, integrated over its active horizon, together with mode-specific constraints.
The heating and pressurizing stage employs the running cost
\begin{equation}
\label{eq:cost1}
\begin{aligned}
    \ell_1(x(t),u_j,\Delta u_j) ={}& -R_{\rm A} + q_1(T_{\rm r} - T_{\rm r,sp})^2 + q_2(P - P_{\rm sp})^2 \\
    & {}+ r_1 (\Delta F_{\rm A,j})^2 + r_2 (\Delta F_{\rm I_2,j})^2 + r_3 (\Delta \psi_j)^2
\end{aligned}
\end{equation}
with stage contribution $\int_{t_{\rm h}^{\rm start}}^{t_{\rm h}^{\rm end}} \ell_1(x(t),u_j,\Delta u_j)\,{\rm d}t$. The first term maximizes monomer utilization, the quadratic tracking terms maintain the temperature and pressure set-points, and the increment penalties temper aggressive actuator movements. Here, index $j$ denotes the control interval within the prediction horizon. $\Delta F_{\rm A,j} = F_{\rm A,j} - F_{\rm A,j-1}$, $\Delta F_{\rm I_2,j} = F_{\rm I_2,j} - F_{\rm I_2,j-1}$, and $\Delta \psi_j = \psi_j - \psi_{j-1}$ (with the first increment taken as zero).
The monomer consumption rate is computed as
\begin{equation}
R_{\rm A} = \left(k_i  c_{\rm I^*} + (k_{\rm g} + k_{\rm tr,m})c_{\lambda_0 } \right)c_{\rm A}V_{\rm l}.
\end{equation}
Here $R_{\rm A}$ is the instantaneous monomer consumption rate (mol/s); maximising $R_{\rm A}$ indirectly minimises batch duration, addressing the ``minimize heating time'' objective in Table~\ref{tab:poly_modes}.

\paragraph{Combinatorial mode complexity} Although only three operational modes are defined (Table~\ref{tab:poly_modes}), the control horizon may span multiple mode entries and exits. Consider the polymerization phase: if temperature oscillates near the switching boundary $T_{\rm r} = T_{\rm r,sp}$, the predicted trajectory may repeatedly cross between phases, entering and leaving the same mode multiple times. Over a 60-step horizon, this generates a sequence of mode transitions that grows combinatorially with the number of crossings. Traditional mixed-integer formulations would require enumerating all such sequences, whereas the smoothing approach avoids this by continuously blending mode-dependent costs and constraints, allowing gradient-based solvers to find locally optimal solutions without explicit mode enumeration.
Stage-specific path constraints are collected in $g_1(x(t),u(t))$:
\begin{equation}
\label{eq:mpc1}
\begin{aligned}
    320.15 \leq T_{\rm r} \leq 373.15, \\
    320.15 \leq T_{\rm J} \leq 373.15, \\
    1.5\times10^5 \leq P \leq 1.6\times10^6, \\
    0 \leq M_{\rm B,in} \leq 0.002, \\
    0 \leq M_{\rm A,in} \leq 3250.
\end{aligned}
\end{equation}
Once $T_{\rm r}$ and $P$ reach their set-points, the controller transitions to the holding stage. The running cost remains $\ell_1$, whereas the admissible set narrows to reflect the tighter quality requirements:
\begin{equation}
\label{eq:mpc2}
\begin{aligned}
    T_{\rm r,sp} - 0.7 \leq T_{\rm r} \leq T_{\rm r,sp} + 0.7, \\
    320.15 \leq T_{\rm J} \leq 373.15, \\
    P_{\rm sp} - 0.1\times10^6 \leq P \leq P_{\rm sp} + 0.1\times10^6, \\
    0 \leq M_{\rm B,in} \leq 0.002, \\
    0 \leq M_{\rm A,in} \leq 3250.
\end{aligned}
\end{equation}
When the cumulative monomer feed reaches its target, the batch enters the finishing stage. The running cost then focuses on temperature regulation and actuator smoothness:
\begin{equation}
\label{eq:cost3}
\begin{aligned}
    \ell_3(x(t),u_j,\Delta u_j) ={}& q_1(T_{\rm r} - T_{\rm r,sp})^2 \
    & {}+ r_1 (\Delta F_{\rm A,j})^2 + r_2 (\Delta F_{\rm I_2,j})^2 + r_3 (\Delta \psi_j)^2
\end{aligned}
\end{equation}
with contribution $\int_{t_{\rm f}^{\rm start}}^{t_{\rm f}^{\rm end}} \ell_3(x(t),u_j,\Delta u_j)\,{\rm d}t$. The corresponding constraints $g_3(x(t),u(t))$ keep the process within safe operating limits:
\begin{equation}
\label{eq:mpc3}
\begin{aligned}
    T_{\rm r,sp} - 0.7 \leq T_{\rm r} \leq T_{\rm r,sp} + 0.7, \\
    320.15 \leq T_{\rm J} \leq 373.15, \\
    1.5\times10^5 \leq P \leq 1.6\times10^6, \\
    0 \leq M_{\rm B,in} \leq 0.002, \\
    0 \leq M_{\rm A,in} \leq 3250.
\end{aligned}
\end{equation}
\subsection{Implementation and Results}
This section presents an analysis of the performance of the proposed real-time NMPC framework in a specific scenario that arises in the operation of an industrial batch gas-liquid two-phase polymerization reactor.

\subsubsection{Implementation Details}
All results presented in this section are obtained in closed-loop simulation using the mechanistic benchmark model described in Supplementary Material as the plant surrogate. The NMPC problem is solved every 30~s with a prediction horizon of 60 steps, providing a 30~min look-ahead window. The finite-dimensional program is generated in CasADi \cite{andersson2019} using third-order direct collocation with four Radau points per interval \cite{vonstryk1993} and is handled by IPOPT \cite{biegler2009}. With 12 differential states and three inputs, the transcription contains 3\,072 decision variables ($3\times60$ input moves, $12\times60\times4$ collocation states, and 12 initial values), 2\,892 defect constraints, and 6\,144 bound constraints. The advanced-step warm start, variable scaling, and fixed iteration cap described in Section~\ref{sec:rt} are all active; together they keep the mean online computation at ${\sim}6$~ms while the background iteration requires 3.58~s (Table~\ref{tab:ablationResult}). To account for computational delay, the previously implemented control move is held constant until a fresh iterate is accepted. This delay mechanism is critical for the comparisons: scenarios with high computational cost (e.g., No-FIL) suffer from significant feedback delays, degrading performance despite having 'exact' solutions.

All simulations were executed on an Intel Core i7-10700F (2.90~GHz) with 8~GB of RAM, representative of the computing platforms available for industrial control \cite{vukov2015}. The resulting workload remains comfortably below the 30~s sampling budget, demonstrating that the proposed architecture can be realised with commodity hardware.

\subsubsection{PID Benchmark Configuration}
The single-loop PID benchmark replicates the specific supervisory logic currently deployed at the industrial plant modeled in this study. Since the process model is derived from an operating reactor, this baseline allows us to quantify performance improvements relative to the actual legacy system rather than a generic or idealized standard. Although cascade or multi-loop configurations can improve regulatory performance, our intent is to compare against real plant practice rather than an idealised baseline. This choice ensures that the reported improvements are directly relevant to practitioners considering NMPC upgrades.
The controller operates at a 1~s sampling interval and shares the same measurements and actuator limits as the NMPC benchmark.
\begin{itemize}[leftmargin=*]
	\item \textbf{Pressure-to-feed loop:} The primary loop regulates the reactor pressure to $P_{\rm sp}=1.5$~MPa by manipulating the monomer feed rate $F_{\rm A}$. A PI law with $K_{\rm p}=10$ and integral time $T_{\rm i}=500$~s (zero derivative action) is used, with the integral contribution clipped to respect $0 \leq F_{\rm A} \leq 0.694$~kg\,s$^{-1}$. The feed is kept at zero during the 900~s preparation stage and once the cumulative monomer charge exceeds 3\,250~kg.
	\item \textbf{Initiator dosing logic:} Initiator addition follows on/off rules. When $P<1.4$~MPa or the cumulative monomer inventory exceeds 2\,950~kg, the initiator feed $F_{\rm I_2}$ is held at zero. Otherwise, the controller switches between $8\times10^{-7}$ and $1\times10^{-7}$~kg\,s$^{-1}$ depending on whether the accumulated initiator mass has reached 1.2~g.
	\item \textbf{Thermal management:} Jacket temperature control uses a split-range structure. Below $T_{\rm r}=345$~K, the hot utility is selected ($T_{\rm jin}=363$~K) and a proportional loop with $K_{\rm p}=5$ adjusts the hot-water flow with saturation $0$--$1$~kg\,s$^{-1}$. Above 345~K, the coolant is selected ($T_{\rm jin}=280$~K) and a PI loop with $K_{\rm p}=20$, $K_{\rm i}=1\times10^{-3}$~s$^{-1}$ governs the cold-water flow in the range $0$--$30$~kg\,s$^{-1}$. The resulting flow commands are mapped to the normalized jacket-valve signal used in the process model.
\end{itemize}
These rules are executed together with the constraint handling and performance monitoring routines in the simulation scripts, yielding the PID trajectories reported in Table~\ref{tab:performance_comparison}.

\subsection{Extended Kalman Filter for State Estimation}
\label{subsec:ekf}
For the unmeasured states in our system, we employ an Extended Kalman Filter (EKF) \cite{ribeiro2004kalman} to estimate their values. The EKF is implemented with the following key parameters:

The measurement vector ${y}$ consists of five process variables:
\begin{equation*}
	{y}_k = \left[  T_{\rm r} ,
	T_{\rm J} ,
	P ,
	M_{\rm A,in} ,
	M_{\rm B,in}\right]^\top	
\end{equation*}

The measurement noise covariance matrix ${R}$ is defined by the standard deviations:
\begin{equation*}
	{R} = \text{diag}\left(\left[\frac{0.1}{3}, \frac{5000}{3}, \frac{0.1}{3}, \frac{0.1}{3}, \frac{1 \times 10^{-6}}{3}\right]^2\right)
\end{equation*}

The process noise covariance matrix ${Q}$ is set to:
	${Q} = \text{diag}([1\times10^{-6}, 1\times10^{-6}, 1, (3\times10^{-5})^2, 0.01, 0.1, 0.04, $
${5000}/{3}, 0.01, 1\times10^{-6}, 1\times10^{-6}, 1])$

The initial state covariance matrix ${P}_{0|0}$ is initialized as:
	${P}_{0|0} = \text{diag}([1\times10^{-6}, 1\times10^{-6}, 1, (3\times10^{-5})^2, 0.01, 0.1, 0.04, {5000}/{3},$ $0.01, 1\times10^{-6}, 1\times10^{-6}, 1])$

The initial states ${x}_0$ are taken from Table \ref{tab:x}, which provides the nominal values for the system initialization.

%
%
%
\subsection{Ablation Studies}
To evaluate the efficacy of key components in our real-time NMPC framework for batch polymerization processes, we conducted a series of ablation experiments. These studies systematically removed or modified individual components of the framework while maintaining others constant, allowing us to quantify their specific contributions to overall controller performance.
We focused on four critical components of our framework:
\begin{enumerate}
	\item Smoothing strategy 
	\item Variable scaling strategy 
	\item Fixed Iteration Limit strategy 
\end{enumerate}

\begin{table}[]
	\caption{Ablation Scenarios.}
	\centering
	\label{tab:ablation}
\footnotesize
\setlength{\tabcolsep}{3pt}
	\begin{tabular}{lll}
		\hline
		\textbf{Scenario} &
		\textbf{Description}
		\\ \hline
		Real-time NMPC                   & Baseline                                                         \\
		No-asStep & Standard NMPC \\
		No-SMS &
		Without smoothing strategy \\
		No-VSS & Without variable scaling strategy     \\
		No-FIL & Without fixed iteration limit strategy  \\ \hline
	\end{tabular}
\end{table}

We evaluated the performance of each scenario using the following metrics:
\begin{enumerate}
	\item Computational Feasibility (CF): Percentage of control intervals where the solution is obtained within the sampling time.
	\item Economic Performance (EP): Calculated as 1 divided by the time required to complete the entire batch, it represents the production rate, normalized to the baseline scenario.
	\item Constraint Satisfaction (CS): Frequency and magnitude of constraint violations, expressed as:
	$\text{CS} = (1 - \sum |\text{constraint violations}|/\sum |\text{constraint limits}|) \times 100\% $
	\item Control Performance (CP): Integrated Absolute Error (IAE) and Max Error (ME) for key controlled variables (temperature), normalized to the baseline scenario.
\end{enumerate}

\begin{table}[]
	\centering
	\caption{Ablation Study Results.}
	\label{tab:ablationResult}
	\resizebox{1\columnwidth}{!}{%
	\begin{tabular}{llllll}
		\hline
		\textbf{Scenario} & \textbf{CF} & \textbf{EP} & \textbf{CS} & \multicolumn{2}{c}{\textbf{CP}}     \\ \cline{5-6} 
		&         &        &        & \textbf{IAE} & \textbf{ME} \\ \hline
		Real-time NMPC                     & \textbf{100.0\%}             & \textbf{100.0\%}             & \textbf{100.0\%}             & \textbf{100.0\%} & \textbf{100.0\%} \\
		No-asStep & 100.0\% & 99.3\% & 98.0\% & 105.8\%      & 191.9\%     \\
		No-SMS    & 100.0\% & 95.0\% & 73.3\% & 325.6\%      & 403.7\%     \\
		No-FIL    & 31.3\%  & 75.9\% & 0.0\%  & 10286.2\%    & 5075.9\%    \\
		No-VSS    & 87.8\%  & 76.0\% & 10.8\% & 11828.2\%    & 6756.1\%    \\ \hline
	\end{tabular}
 }
\end{table}

Table~\ref{tab:ablationResult} summarises how each architectural ingredient supports real-time operation. The baseline implementation keeps online iterations to 0.006~s on average (3.580~s in the background) while delivering best-in-class performance. Removing the advanced-step execution (No-asStep) leaves the closed-loop metrics essentially unchanged yet inflates the online computation time to 3.721~s, roughly $60\times$ slower and therefore incompatible with the sampling rate.

Excluding the smoothing strategy (No-SMS) preserves feasibility but erodes economics, constraint satisfaction, and control quality by $5$--$325\%$ and doubles the background effort. Eliminating the fixed iteration limit (No-FIL) or variable scaling (No-VSS) is catastrophic: feasibility collapses to $31.3\%$ and $87.8\%$, respectively, and control performance deteriorates by several orders of magnitude. These trends confirm that smoothing, scaling, and capped iterations are jointly responsible for safe, timely optimisation.

\begin{figure}[H]	
	\centering
	\begin{subfigure}[b]{0.46\columnwidth}
		\includegraphics[width=\columnwidth]{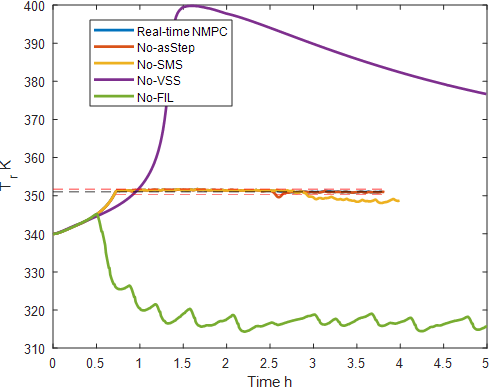}
		\caption{\normalsize$T_{\rm r}$}
		\label{fig:y1}
	\end{subfigure}
	\begin{subfigure}[b]{0.46\columnwidth}
		\includegraphics[width=\columnwidth]{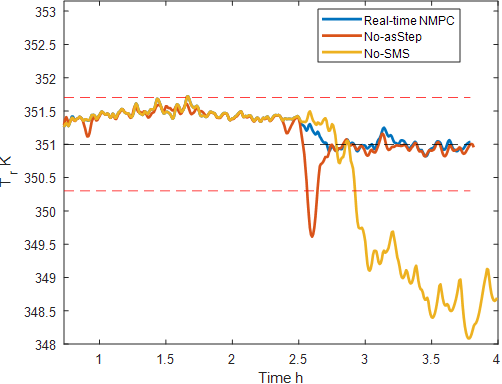}
		\caption{\normalsize$T_{\rm r}$}
		\label{fig:y2}
	\end{subfigure}
	\caption{Dynamic simulation}
	\label{fig:dy}
\end{figure}
Fig.~\ref{fig:dy} reports the temperature trajectories for the ablation scenarios. Panel~(a) shows that the baseline and No-SMS cases track the 351~K setpoint closely, whereas removing scaling (No-VSS) or iteration limits (No-FIL) generates severe constraint violations: the former overshoots the safety bound, the latter quenches the reactor. These behaviours align with the feasibility figures in Table~\ref{tab:ablationResult} (87.8\% and 31.3\%, respectively).

Panel~(b) zooms in on the well-behaved cases, highlighting how smoothing suppresses oscillations after the three-hour mark and how the advanced-step update dampens residual fluctuations. 

Variable scaling improves both conditioning and convergence speed. At $t=0$ a single primal-dual iteration takes 0.062~s with scaling and 0.073~s without. Figure~\ref{fig:convergence} compares the feasibility decay and confirms the faster contraction of both primal and dual residuals under the scaled formulation.
\begin{figure}[H]
	\centering
	\begin{subfigure}[b]{0.45\columnwidth}
		\includegraphics[width=\columnwidth]{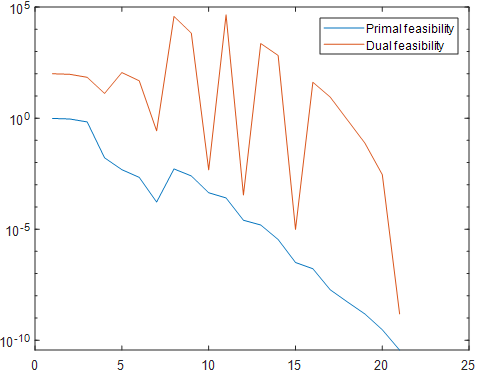}
		\caption{VSS}
	\end{subfigure}
	\begin{subfigure}[b]{0.45\columnwidth}
		\includegraphics[width=\columnwidth]{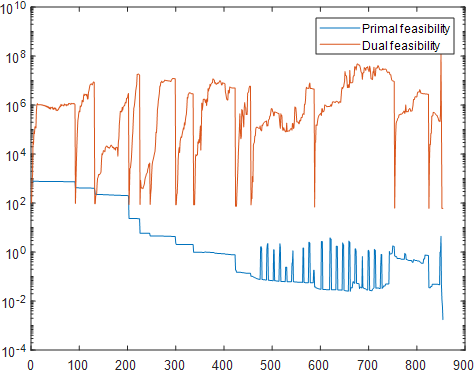}
		\caption{No-VSS}
	\end{subfigure}
	\caption{Convergence under VSS and No-VSS}
	\label{fig:convergence}
\end{figure}

These results collectively emphasize the synergistic nature of the proposed strategies in our real-time NMPC architecture. Each component plays a vital role in achieving robust, efficient, and safe control in demanding industrial polymerization processes. The smoothing strategy is crucial for maintaining performance, stability, and computational efficiency, while the fixed iteration limit and variable scaling strategies are essential for ensuring both the safety of the process and the real-time viability of the control algorithm.

\subsection{Robustness Tests and Comparison with PID Control}
The NMPC and PID configurations described above, including the EKF estimator in Section~\ref{subsec:ekf}, were evaluated under three operating scenarios to quantify robustness and benchmark performance against conventional practice:
\begin{enumerate}
    \item Nominal case without disturbances
    \item Worst-case disturbance scenario
    \item Random disturbances with 50 batch runs
\end{enumerate}

\begin{table*}[ht]
    \centering
    \caption{Performance comparison between NMPC and PID under different scenarios. IAE$_T$ and IAE$_P$ are computed as sample-wise sums of absolute errors; pressure deviations are normalised by $1.01\times10^5$~Pa ($\approx 0.1$~MPa) before accumulation.}
    \label{tab:performance_comparison}
    \small
    \resizebox{\textwidth}{!}{
    \begin{tabular}{llccccccc}
        \hline
        \textbf{Controller} & \textbf{Scenario} & \textbf{Time (h)}  & \textbf{$T_{\text{mean}}$} & \textbf{$T_{\text{max}}$} & \textbf{$P_{\text{mean}}$} & \textbf{$P_{\text{max}}$} & \textbf{IAE$_T$} & \textbf{CS (\%)} \\
        & & & (K) & (K) & (MPa) & (MPa) & (K$\cdot$sample) & \\
        \hline
        NMPC & No Disturbance & 3.81 & 0.30 & 0.73 & 0.147 & 1.21 & 111.68 & 99.56 \\
        NMPC & Random Dist. & 3.81 & 0.30 & 0.67 & 0.152 & 1.25 & 110.48 & 99.19 \\
        NMPC & Worst-case & 3.89 & 0.39 & 0.85 & 0.212 & 1.24 & 148.06 & 89.72 \\
        \hline
        PID & No Disturbance & 4.07 & 0.60 & 1.63 & 0.004 & 0.06 & 6811.39 & 71.68 \\
        PID & Random Dist. & 4.07 & 0.62 & 1.66 & 0.004 & 0.06 & 6970.20 & 71.19 \\
        PID & Worst-case & 5.56 & 5.23 & 51.13 & 0.095 & 3.44 & 87808.02 & 16.50 \\
        \hline
    \end{tabular}
    }
\end{table*}

Table~\ref{tab:performance_comparison} presents a comprehensive comparison between our NMPC framework and conventional PID control. Several key observations can be made:

\begin{enumerate}
    \item \textbf{Batch Time}: The NMPC controller consistently achieves shorter batch times (3.81-3.89 hours) compared to PID control (4.07-5.56 hours), representing a 6-31\% improvement in production efficiency.
    
    \item \textbf{Temperature Control}: NMPC demonstrates superior temperature control with mean deviations ($T_{\text{mean}}$) of 0.30-0.39K, significantly lower than PID's 0.60-5.23K. This is particularly evident in the worst-case scenario, where PID's maximum temperature deviation reaches 51.13K compared to NMPC's 0.85K.
    
    \item \textbf{Pressure Control}: While PID shows smaller pressure deviations in nominal conditions, NMPC maintains better pressure control under disturbances, particularly in the worst-case scenario where PID's maximum pressure deviation (3.44 MPa) is nearly triple that of NMPC (1.24 MPa).
    
    \item \textbf{Product Quality}: The NMPC controller maintains more consistent product quality metrics ($M_n$ and PDI) across all scenarios, while PID shows significant variations, particularly under worst-case disturbances.
    
    \item \textbf{Constraint Satisfaction}: NMPC maintains high constraint satisfaction rates (89.72-99.56\%) across all scenarios, while PID's performance degrades significantly under disturbances (71.68\% in nominal case to 16.50\% in worst-case).
\end{enumerate}

    
Fig.~\ref{fig:dynamic_comparison_nominal} depicts the nominal closed-loop trajectories. Both controllers stabilise the reactor, yet NMPC achieves tighter temperature and pressure regulation and a shorter batch by coordinating the monomer feed more aggressively while respecting constraints. 
\begin{figure}[H]
	\centering
	\includegraphics[width=\columnwidth]{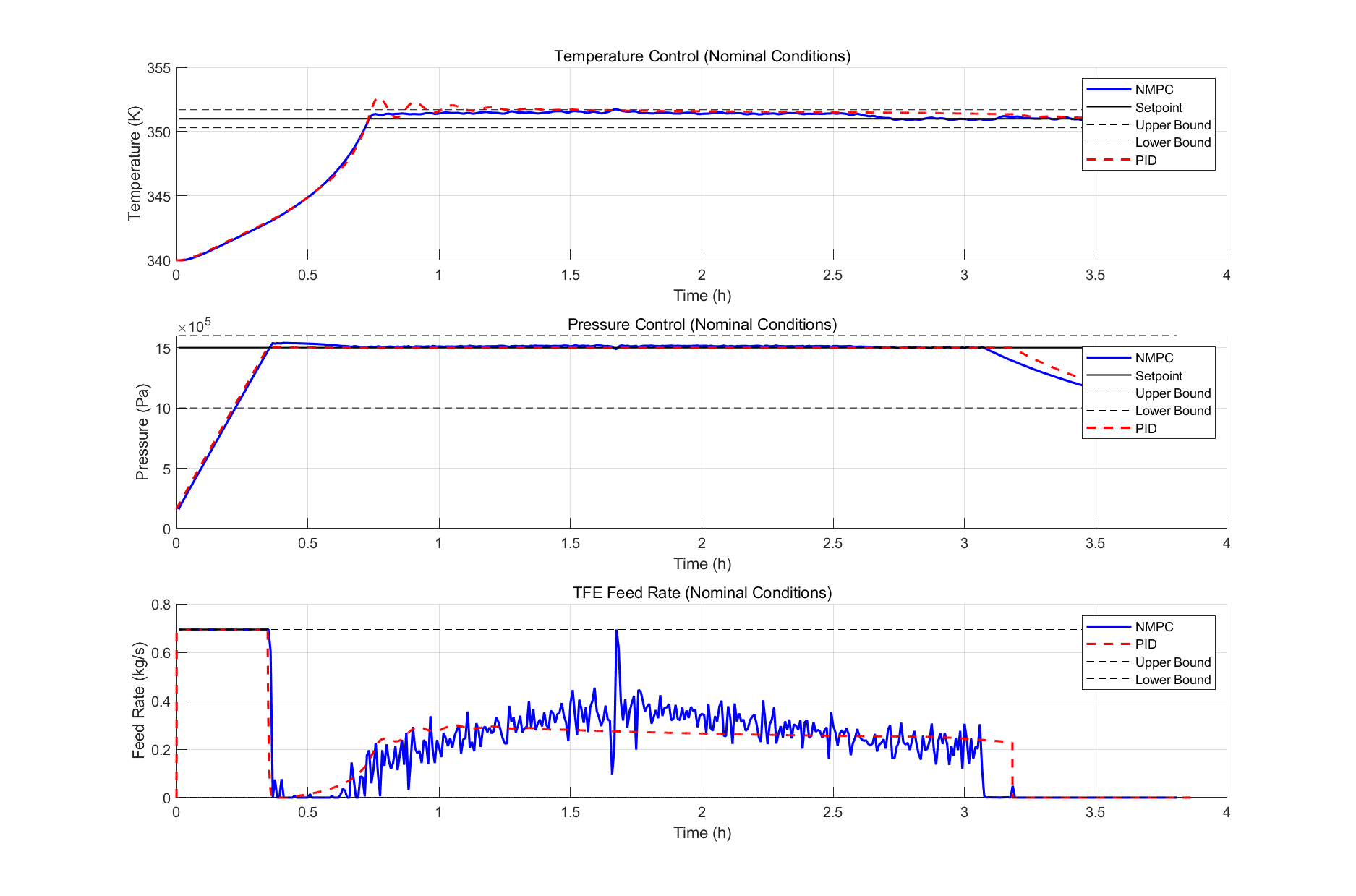}
	\caption{Dynamic response comparison between NMPC and PID under nominal scenario}
	\label{fig:dynamic_comparison_nominal}
\end{figure}

\begin{figure}[!ht]
	\centering
	\includegraphics[width=\columnwidth]{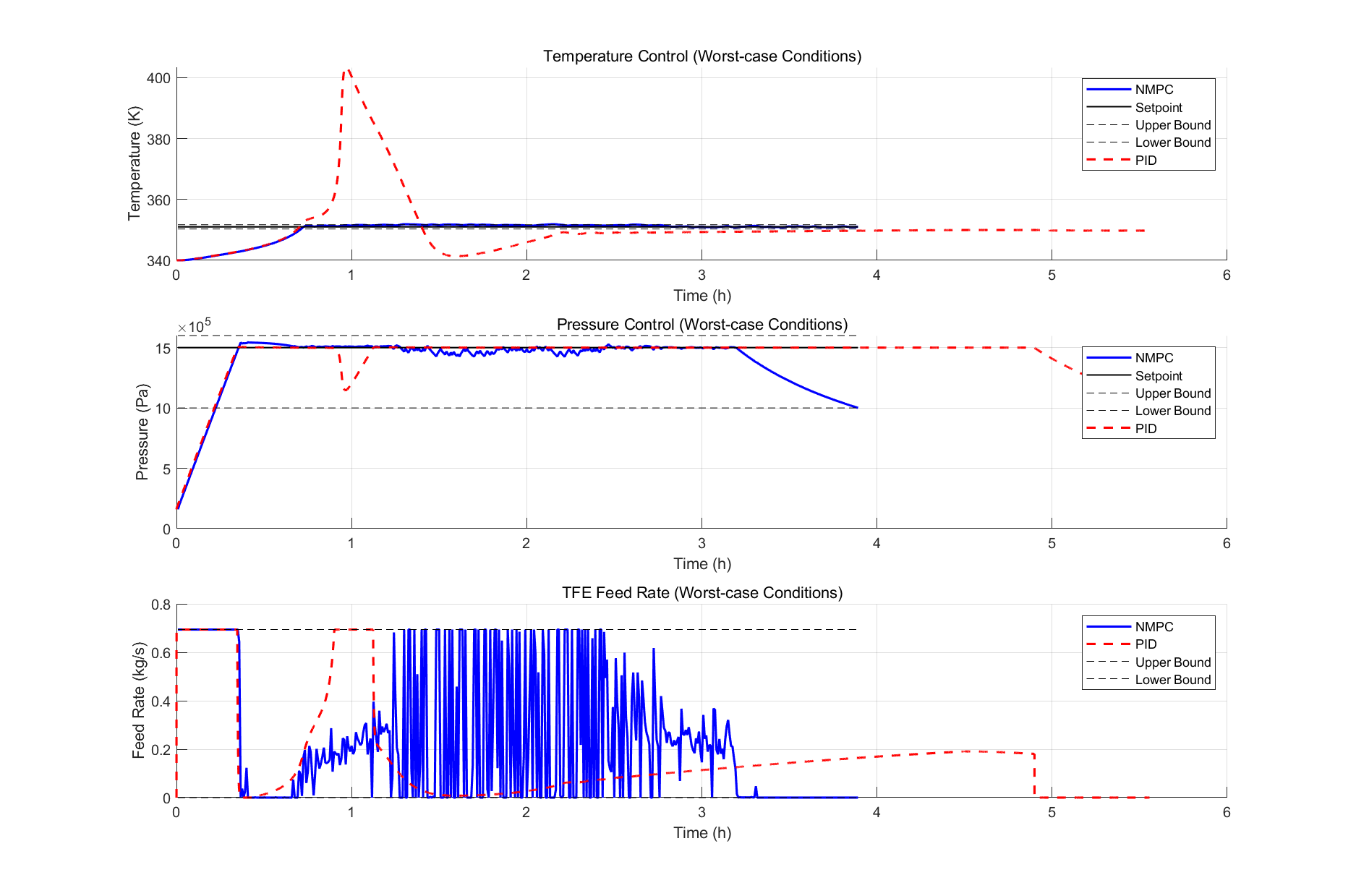}
	\caption{Dynamic response comparison between NMPC and PID under worst-case scenario}
	\label{fig:dynamic_comparison_worst}
\end{figure}

Figure~\ref{fig:dynamic_comparison_worst} shows that worst-case disturbances amplify these contrasts: PID oscillations lead to large temperature excursions (up to 51~K deviation) and delayed completion, consistent with Table~\ref{tab:performance_comparison}.

It is worth noting that the manipulated inputs in Fig.~\ref{fig:dynamic_comparison_worst} exhibit high-frequency oscillations during the temperature-holding phase. This phenomenon, known as \textit{active set chattering}, arises from the significant plant-model mismatch in this worst-case scenario. The controller attempts to operate the reactor precisely at the active thermal constraint boundary ($T_{r} \le T_{\rm r,sp} + 0.7$); however, prediction errors cause the system to repeatedly engage and disengage this constraint between sampling instances. Although this leads to aggressive control actions, the NMPC successfully maintains closed-loop stability and prevents thermal runaway, demonstrating robustness even under extreme parameter uncertainty.

Random-disturbance tests over 50 batches further confirm NMPC robustness. Temperature metrics remain within 1.1\% of nominal values, whereas PID performance degrades markedly as perturbations accumulate.

\section{Conclusions and Discussion}
\label{sec:con}
This paper presents an integrated real-time NMPC framework designed to address the computational challenges in industrial batch polymerization. By systematically combining smoothed switching, advanced-step warm-starting, variable scaling, and rigorous tuning, our approach bridges the gap between theoretical NMPC and industrial practice. This work delivers four key contributions:

\textbf{Theoretical Analysis:} We prove the smoothed switching formulation acts as an $\epsilon$-suboptimal approximation scheme. Explicit bounds (Theorem~\ref{thm:approximation_error}) and ``Feasibility Consistency'' ensure the numerical problem faithfully represents the original constraints without phantom infeasibility.

\textbf{Practical Tuning:} A systematic procedure balances approximation accuracy against numerical stability, resolving the trade-off between gradient explosion and model degradation.

\textbf{Integrated Real-Time Framework:} Fixed iteration limits and IPOPT scaling mechanisms enable these components to function cohesively under strict timing constraints, ensuring robust real-time performance.

\textbf{Validation:} Comparative analysis confirms superior disturbance rejection over industry-standard PID. Ablation studies quantify that neglecting delay compensation or smoothing significantly compromises stability and economic optimality.

The architecture developed in this research has broad applicability beyond polymerization processes. Its framework for managing computational constraints while maintaining control performance can be adapted to a wide range of nonlinear control problems across various industries, including chemical processing, energy systems, and advanced manufacturing.

For industrial applications, it is crucial to consider the time delays associated with data transmission. In this context, deploying NMPC on edge computing devices, particularly using dedicated hardware such as Field-Programmable Gate Arrays (FPGAs), presents an interesting opportunity. This approach can not only accelerate computations but also significantly reduce transmission latencies, further enhancing the real-time capabilities of our NMPC architecture.

It is worth noting that our simulations were conducted in an environment with relatively abundant computing resources. In actual industrial settings, multiple complex computations may need to be executed in parallel on the same computing platform. This reality opens an avenue: investigating methods for allocating and scheduling control-oriented computational resources in environments with limited or competitive computing capacity.

While our current work focuses primarily on addressing computational delays in NMPC deployment, we acknowledge that other challenges, such as model parameter uncertainty, remain to be fully explored. These factors can lead to more complex control problems with increased computational demands. However, the framework proposed in this paper provides a solid foundation for addressing these additional complexities.

In conclusion, this work represents a significant step forward in making NMPC more practical and efficient for industrial applications. By addressing the critical issue of computational delay, our approach brings the theoretical benefits of NMPC closer to widespread industrial adoption.
These advancements contribute to the development of process control methods and support their implementation in industrial applications, leading to improved product quality, increased energy efficiency, and enhanced operational safety in complex industrial processes.

\section*{Acknowledgments}
The authors declare that they have no known competing financial interests or personal relationships that could have appeared to influence the work reported in this paper. The postdoctoral fellowship of C.Z. was funded by KU Leuven through project ZKE5508.


\section*{Declaration of generative AI and AI-assisted technologies in the manuscript preparation process}
During the preparation of this work, the authors used ChatGPT (OpenAI, GPT-5) to assist with language editing and improve readability. After using this tool, the authors reviewed and edited the content as needed and take full responsibility for the content of the published article.

\section*{CRediT author statement}
\textbf{Chenchen Zhou}: Conceptualization, Methodology, Formal analysis, Software, Validation, Visualization, Writing â€“ original draft, Writing â€“ review and editing, Supervision, Project administration. \textbf{Zuzhen Ji}: Writing â€“ review and editing, Validation, Visualization, and Funding acquisition. \textbf{Jose Matias}: Supervision, Formal analysis, Writing â€“ review and editing, Resources, and Funding acquisition.

\bibliographystyle{elsarticle-num}
\bibliography{reference}

\appendix
\section{Proofs of the Results in Section~\ref{sec:theoretical_analysis}}
\label{app:proofs}
This appendix collects the proofs of Lemmas~\ref{lem:transition_region}--\ref{lem:cost_function_bound} and Theorem~\ref{thm:approximation_error}.

\subsection*{Proof of Lemma~\ref{lem:transition_region}}
When $h_i(x)\ge 0$ we have $p_i(x)=1$ and $|p_i^{\text{obj}}(x)-p_i(x)|>\epsilon$ is equivalent to
$1-\sigma(\alpha_i h_i(x))>\epsilon$. Using the identity $1 - \sigma(z) = \frac{e^{-z}}{1+e^{-z}}$ for $\sigma(z)=(1+\exp(-z))^{-1}$ with $z=\alpha_i h_i(x)$, this inequality implies:
\[
\frac{e^{-z}}{1+e^{-z}} > \epsilon \implies e^{-z} > \epsilon(1+e^{-z}) \implies e^{-z}(1-\epsilon) > \epsilon \implies e^{-z} > \frac{\epsilon}{1-\epsilon}.
\]
Taking natural logarithms yields $-\alpha_i h_i(x) > \ln(\frac{\epsilon}{1-\epsilon})$, or $h_i(x) < \frac{1}{\alpha_i}\ln(\frac{1-\epsilon}{\epsilon}) = \delta_i(\alpha_i)$. When $h_i(x)<0$ we have $p_i(x)=0$ and the condition
$|p_i^{\text{obj}}(x)-p_i(x)|>\epsilon$ reduces to $\sigma(\alpha_i h_i(x))>\epsilon$, namely
$\exp\bigl(-\alpha_i h_i(x)\bigr)<(1-\epsilon)/\epsilon$, which yields $h_i(x)>-\delta_i(\alpha_i)$. Combining the
two bounds shows that $|h_i(x)|<\delta_i(\alpha_i)$ is necessary and sufficient for the approximation
error to exceed $\epsilon$, and the stated result follows. \hfill$\square$

\subsection*{Proof of Lemma~\ref{lem:dwell_time}}
Refer to the transition region $\mathcal{R}_i(\alpha_i)$ characterized in Lemma~\ref{lem:transition_region} with half-width $\delta_i(\alpha_i)$. Consider a single crossing event $k \in \{1,\dots, K_i\}$ where the trajectory traverses this region. Let $t_{\text{in}}^k$ and $t_{\text{out}}^k$ denote the time instants when the trajectory enters and leaves $\mathcal{R}_i$, respectively. The total variation of the switching function across this traversal corresponds to the full width of the region:
\[
    \left| h_i(x(t_{\text{out}}^k)) - h_i(x(t_{\text{in}}^k)) \right| = |(\pm \delta_i) - (\mp \delta_i)| = 2\delta_i(\alpha_i).
\]
By the fundamental theorem of calculus,
\[
    h_i(x(t_{\text{out}}^k)) - h_i(x(t_{\text{in}}^k)) = \int_{t_{\text{in}}^k}^{t_{\text{out}}^k} \frac{d}{dt} h_i(x(\tau)) \, d\tau.
\]
Using the transversality condition from Assumption~\ref{ass:switching_surface}, we have $\left| \frac{d}{dt} h_i(x(t)) \right| = |\nabla h_i^\top f(x,u)| \ge \nu_i > 0$ inside the region (where $\delta_i \le \bar{\delta}_i$). Therefore
\[
    2\delta_i(\alpha_i) = \left| \int_{t_{\text{in}}^k}^{t_{\text{out}}^k} \frac{d}{dt} h_i(x(\tau)) \, d\tau \right| \ge \nu_i (t_{\text{out}}^k - t_{\text{in}}^k),
\]
so $\Delta t^k \le 2\delta_i(\alpha_i)/\nu_i$. Summing over at most $K_i$ crossings gives the stated bound. \hfill$\square$

\subsection*{Proof of Lemma~\ref{lem:feasible_inclusion}}
(\emph{Left inclusion.}) Fix $x\in\mathcal S_{\text{orig}}$ and an index $i$. If $h_i(x)\ge 0$, then $p_i(x)=1$ and $p_i(x)\,g_i(x,u)\le 0$ implies $g_i(x,u)\le 0$. Since $p_i^{\text{cons}}(x)\in(0,1]$, we have $p_i^{\text{cons}}(x)\,g_i(x,u)\le 0\le\tau_{\mathrm{feas}}$. If $h_i(x)<0$, then $p_i(x)=0$ and the inequality is trivial. Therefore $x\in\mathcal S_{\text{smooth}}^{\tau}$.

(\emph{Right inclusion.}) Let $x\in\mathcal S_{\text{smooth}}^{\tau}$ and fix $i$. If $h_i(x)\ge 0$, then by monotonicity of $\sigma$ we have $p_i^{\text{cons}}(x)\ge \bar{p}_i(\alpha_i,\beta_i)=\sigma(-\alpha_i\beta_i)$. From $p_i^{\text{cons}}(x)\,g_i(x,u)\le \tau_{\mathrm{feas}}$ it follows that $g_i(x,u)\le \tau_{\mathrm{feas}}/\bar{p}_i(\alpha_i,\beta_i)$. Multiplying by $p_i(x)=1$ gives the desired inequality in $\mathcal S_{\text{orig}}^{+}(\tau)$. If $h_i(x)<0$, then $p_i(x)=0$ and the inequality is trivial. Therefore $x\in\mathcal S_{\text{orig}}^{+}(\tau)$. \hfill$\square$

\subsection*{Proof of Lemma~\ref{lem:cost_function_bound}}
Let $x(t)$ denote the trajectory generated by $U$. Lemma~\ref{lem:transition_region} bounds the band where $p_i^{\text{obj}}\neq p_i$, while Lemma~\ref{lem:dwell_time} shows that the residence time inside that band is $O(\alpha_i^{-1})$. Bounding the difference in stage costs by $C_i=\sup\{\|l_i(x,u)\|:|h_i(x)|\le\delta_i(\alpha_i),\,u\in\mathbb{U}\}$ therefore yields
\begin{align*}
	|J^{\text{smooth}}(x(t_k),U) - J(x(t_k),U)|
	&\le \sum_{i=1}^{M} C_i\,K_i\,\frac{2}{\alpha_i{\nu}_i}\ln\frac{1-\epsilon}{\epsilon}
	= \sum_{i=1}^{M} \frac{c_i}{\alpha_i}\\
	&\le \Bigl(\sum_{i=1}^{M} c_i\Bigr)\alpha_{\min}^{-1}
	= \gamma(\alpha_{\min}^{-1}),
\end{align*}
which decreases monotonically with the smoothing gains. \hfill$\square$

\subsection*{Proof of Theorem~\ref{thm:approximation_error}}
Since $\mathcal{S}_{\text{orig}} \subseteq \mathcal{S}_{\text{smooth}}^{\tau}$, $U^\ast(x)$ is feasible for the smoothed problem. Optimality of $U^{\text{smooth}\ast}$ gives
\[
  J^{\text{smooth}}(x, U^{\text{smooth}\ast}) \ \le\ J^{\text{smooth}}(x, U^\ast).
\]
By Lemma~\ref{lem:cost_function_bound},
\[
\begin{aligned}
  J(x, U^{\text{smooth}\ast})
  &\le J^{\text{smooth}}(x, U^{\text{smooth}\ast}) + \gamma(\alpha_{\min}^{-1})\\
  &\le J^{\text{smooth}}(x, U^\ast) + \gamma(\alpha_{\min}^{-1})\\
  &\le J(x, U^\ast) + 2\,\gamma(\alpha_{\min}^{-1}).
\end{aligned}
\]
The local second-order growth around $U^\ast$ implies $J(x,U)-J(x,U^\ast)\ge \tfrac{\mu}{2}\|U-U^\ast\|^2$ for $\|U-U^\ast\|\le r$, so for $U=U^{\text{smooth}\ast}$ we obtain
\[
  \tfrac{\mu}{2}\,\|U^{\text{smooth}\ast}-U^\ast\|^2 \ \le\ 2\,\gamma(\alpha_{\min}^{-1}),
\]
which yields the bound. If needed, one restricts $\alpha_i$ so that the bound holds within the radius $r$ where the growth condition is valid. \hfill$\square$

\clearpage
\section*{Supplementary Material}
\setcounter{equation}{0}
\renewcommand{\theequation}{S\arabic{equation}}
\setcounter{table}{0}
\renewcommand{\thetable}{S\arabic{table}}
\section*{S1. Detailed Process Model}
This supplementary material details the reaction mechanism and mathematical model of the industrial batch polymerization process described in Section 5 of the main manuscript.

\subsection*{S1.1 Polymerization Mechanism}
The free radical polymerization mechanism involves initiator decomposition, chain initiation, growth, transfer, and termination, as summarized in Table~\ref{table:polymerization_mechanism}.

\begin{table}[h]
	\caption{Mechanism of free radical polymerization}
	\label{table:polymerization_mechanism}
	\centering
	\begin{tabular}{l|ll}
		\hline
		\textbf{Reaction types} & \textbf{Reaction equation} & \\
		\hline
		Initiator decomposition & $\rm I_2 \xrightarrow{\textit{k}_{\rm d}} 2I^*$ & $k_{\rm d}=k_{\rm d0}\exp\left({-{E_{\rm d}}/({\rm R}T_{\rm r})}\right)$\\
		Chain initiation & $\rm A + I^* \xrightarrow{\textit{k}_i} P_1$ &$k_{\rm i}=k_{\rm i0}\exp\left({-{E_{\rm i}}/({\rm R}T_{\rm r})}\right)$\\
		Chain growth & $\rm P_n + A \xrightarrow{\textit{k}_g} P_{n+1}$ &$k_{\rm g}=k_{\rm g0}\exp\left({-{E_{\rm g}}/({\rm R}T_{\rm r})}\right)$\\
		Chain transfer to monomer & $\rm P_n + A \xrightarrow{\textit{k}_{tr,m}} D_n + P_1$ &$k_{\rm tr,m}=k_{\rm tr,m0}\exp\left({-{E_{\rm tr,m}}/({\rm R}T_{\rm r})}\right)$ \\
		Chain transfer to chain transfer agent & $\rm P_n + B \xrightarrow{\textit{k}_{tr,a}} D_n + I^*$ &$k_{\rm tr,a}=k_{\rm tr,a0}\exp\left({-{E_{\rm tr,a}}/({\rm R}T_{\rm r})}\right)$\\
		Chain termination & $\rm P_n + P_m \xrightarrow{\textit{k}_t} D_{n+m}$ &$k_{\rm t}=k_{\rm t0}\exp\left({-{E_{\rm t}}/({\rm R}T_{\rm r})}\right)$\\
		\hline
	\end{tabular}
\end{table}

\subsection*{S1.2 Mathematical Model Equations}
The macroscopic model is derived using the method of moments. The moment balances for the polymer chains are:
\begin{align}
    \frac{{\rm d}(c_{\lambda_0}V_{\rm l})}{{\rm d}t} &= \bigl(k_i c_{\rm A} c_{\rm I^*} - k_{\rm tr,a} c_{\rm B}c_{\lambda_0 } - k_{\rm t} c_{\lambda_0 }^2\bigr)V_{\rm l} \label{eq:mom1}\\
    \frac{{\rm d}(c_{\rm I^*} V_{\rm l})}{{\rm d}t} &= \bigl(2f k_{\rm d} c_{\rm I_2} - k_i c_{\rm A} c_{\rm I^*} + k_{\rm tr,a} c_{\rm B}c_{\lambda_0 }\bigr)V_{\rm l} \label{eq:mom2}\\
    \frac{{\rm d}(c_{\rm A} V_{\rm l})}{{\rm d}t} &= \left(\frac{1000}{M_{g_A}}\frac{F_{\rm g2l}}{V_{\rm l}} - k_i c_{\rm A} c_{\rm I^*} - (k_{\rm g} + k_{\rm tr,m})c_{\lambda_0 } c_{\rm A}\right)V_{\rm l} \label{eq:mom3}\\
    \frac{{\rm d}(c_{\rm I_2} V_{\rm l})}{{\rm d}t} &= \left(\frac{1000}{M_{g_{I_2}}}\frac{F_{\rm I_2,in}}{V_{\rm l}} - 2fk_{\rm d} c_{\rm I_2}\right)V_{\rm l} \label{eq:mom4}\\
    \frac{{\rm d}(c_{\rm B} V_{\rm l})}{{\rm d}t} &= -k_{\rm tr,a} c_{\lambda_0 } c_{\rm B} V_{\rm l}. \label{eq:mom5}
\end{align}

The reactor thermal and inventory balances are:
\begin{align}
    \frac{{\rm d}T_{\rm r}}{{\rm d}t} &= \frac{ Q_{\rm feed}+Q_{\rm w}+Q_{\rm r}-Q_{\rm loss}}{M_{\rm s}C_{\rm p,s}+M_{\rm A}C_{\rm p,A}} \label{eq:bal1}\\
    \frac{{\rm d}T_{{\rm J}}}{{\rm d}t} &= \frac{1}{M_j c_p}\Bigl(K_p(\psi) - UA(T_{{\rm J}} - T_{\rm r})\Bigr) \label{eq:bal2}\\
    \frac{{\rm d}N_{\rm gA}}{{\rm d}t} &= \frac{1000}{M_{\rm g_A}}(F_{\rm A,in} - F_{\rm g2l}) \label{eq:bal3}\\
    \frac{{\rm d}P}{{\rm d}t} &= \frac{R}{V_g/1000}\left(T_{\rm r} \frac{{\rm d}N_{\rm gA}}{{\rm d}t} + N_{\rm gA} \frac{{\rm d}T_{\rm r}}{{\rm d}t}\right) \label{eq:bal4}\\
    \frac{{\rm d}M_{A_{\rm in}}}{{\rm d}t} &= F_{\rm A,in} \label{eq:bal5}\\
    \frac{{\rm d}M_{B_{\rm in}}}{{\rm d}t} &= F_{\rm I_2,in} \label{eq:bal6}\\
    \frac{{\rm d}V_{\rm l}}{{\rm d}t} &= 0.06667\, F_{\rm g2l}. \label{eq:bal7}
\end{align}

The auxiliary relations are:
\begin{equation}
	\label{eq:auxiliary}
\begin{aligned}
	&F_{\rm g2l} = k(PH-1000c_{\rm A})\\
	&Q_{{\rm feed}}=F_{{\rm in,A}}C_{{\rm p,A}}(T_{{\rm A}}-T_{{\rm r}})\\
	&Q_{{\rm w}}=US(T_{{\rm J}}-T_{{\rm r}})\\
	&Q_{{\rm r}}=\Delta Hk_{\rm g}c_{\rm A}c_{\lambda_{0}}V_{{\rm l}}\\
	&Q_{{\rm loss}}=b_1(T_{{\rm r}}-T_{{\rm amb}})^{b_2} \\
	&K_p(\psi) = \left\{ 
	\begin{aligned}
		&-\psi F_{\rm clod,max}C_p(T_{\rm cold}-T_{\rm J}) & &-1\leq \psi \leq 0 \\
		&\psi F_{\rm hot,max} C_p(T_{\rm hot}-T_{\rm J})  & &0 < \psi \leq1 
	\end{aligned}
	\right.
\end{aligned}
\end{equation}

\subsection*{S1.3 Model Parameters}
\begin{table}[h]
	\centering
	\caption{Parameters of the gas-liquid two-phase reactor model.}
	\label{tab:para}
	\begin{tabular}{llll}
		\hline
		{Parameter} & {Description}                           & {Value} & {Units} \\ \hline
		$R$               & Ideal gas constant                            & 8.314         & $\mathrm{J/(mol\cdot K)}$     \\
		$V$               & Reactor volume                                & 6000          & L             \\
		$k_{10}$          & Pre-exponential factor for   reaction 1       & 1.13E+17      & 1/s           \\
		$k_{20}$          & Pre-exponential factor for   reaction 2       & 3.62E+15      & -             \\
		$k_{30}$          & Pre-exponential factor for   reaction 3       & 5.49E+07      & -             \\
		$k_{40}$          & Pre-exponential factor for   reaction 4       & 9.96E+07      & -             \\
		$k_{50}$          & Pre-exponential factor for   reaction 5       & 3.15E+06      & -             \\
		$k_{70}$          & Pre-exponential factor for   reaction 7       & 3.38E+09      & -             \\
		$E_1$             & Activation energy for reaction   1            & 1.35E+05      & J/mol         \\
		$E_2$             & Activation energy for reaction   2            & 119715        & J/mol         \\
		$E_3$             & Activation energy for reaction   3            & 17413.76      & J/mol         \\
		$E_4$             & Activation energy for reaction   4            & 53020         & $\mathrm{J/mol}$         \\
		$E_5$             & Activation energy for reaction   5            & 20000         & J/mol         \\
		$E_7$             & Activation energy for reaction   7            & 13604.5       & J/mol         \\
		$k$               & Gas-liquid mass transfer rate   constant of A & 0.0562        & $\mathrm{kg\cdot m/(s\cdot mol)}$ \\ 
        $H$               & Henry's constant                                 & 1/700         & $\mathrm{mol/(Pa\cdot m^3)}$   \\
        $M_{g_A}$         & Relative molecular mass of A                     & 100           & g/mol         \\
        $M_{g_B}$         & Relative molecular mass of B                     & 228           & g/mol         \\
        $C_{p_A}$         & Specific heat capacity of A                      & 804           & $\mathrm{J/(kg\cdot K)}$      \\
        $T_A$             & Temperature of A feed                            & 333           & K             \\
        $a$               & Coefficient of heat loss to the   ambient        & 1             & -             \\
        $b$               & Coefficient of heat loss to the   ambient        & 2             & -             \\
        $T_{amb}$         & Ambient temperature                              & 293           & K             \\
        $Q_{stir}$        & Stirring heat                                    & 0.5           & W             \\
        $M$               & Mass of reactor liquid phase                     & 4000          & kg            \\
        $M_j$             & Mass of jacket water                             & 2000          & kg            \\
        $C_p$             & Specific heat capacity of reactor   liquid phase & 4200          & $\mathrm{J/(kg\cdot K)}$      \\
        $T_{r_{sp}}$      & Reactor temperature set point                    & 351           & K             \\
        $P_{sp}$          & Reactor gas pressure set   point                 & 1500000       & Pa            \\ \hline
	\end{tabular}
\end{table}

\end{document}